\documentclass{amsart}
\usepackage{amsthm,amsmath,amsfonts,amssymb,amscd,mathrsfs,graphics}
\usepackage{latexsym}
\usepackage{graphicx}

\usepackage{epic}

\usepackage{enumerate}
\usepackage[...]{youngtab}

\usepackage{placeins}
\usepackage{tikz}

\newtheorem{thm}{Theorem}[section]

\newtheorem{lem}[thm]{Lemma}
\newtheorem{prop}[thm]{Proposition}

\makeatletter
\renewcommand{\@seccntformat}[1]{\S{\csname
the#1\endcsname}\hspace{0.5em}}
\makeatother

\begin{document}

\title{Strong Gelfand Pairs of SL($\bf{2,p}$)}

\author{Andrea Barton, Stephen P.  Humphries}
  \address{Department of Mathematics,  Brigham Young University, Provo, 
UT 84602, U.S.A.
E-mail: andreabarton24@gmail.com, steve@mathematics.byu.edu}
\date{}
\maketitle

\begin{abstract}  
A strong Gelfand pair $(G,H)$ is a group $G$ together with a subgroup $H$ such that every irreducible character of $H$ induces a multiplicity-free character of $G$.  We classify the strong Gelfand pairs of the special linear groups
$SL(2,p)$ where $p$ is a prime.

\medskip

\noindent {\bf Keywords}: Strong Gelfand pair, special linear group, upper triangular group, characters, finite groups. \newline 
\medskip
\subjclass[2010]{Primary: 20G40 Secondary: 20C15 20G05}
\end{abstract}

\theoremstyle{plain}

\theoremstyle{definition}
\newtheorem*{dfn}{Definition}
\newtheorem{exa}[thm]{Example}
\newtheorem{rem}[thm]{Remark}

\newcommand{\ds}{\displaystyle}
\newcommand{\bs}{\boldsymbol}
\newcommand{\mb}{\mathbb}
\newcommand{\mc}{\mathcal}
\newcommand{\mf}{\mathfrak}
\renewcommand{\mod}{\operatorname{mod}}
\newcommand{\mult}{\operatorname{Mult}}

\def \a{\alpha} \def \b{\beta} \def \d{\delta} \def \e{\varepsilon} \def \g{\gamma} \def \k{\kappa} \def \l{\lambda} \def \s{\sigma} \def \t{\theta} \def \z{\zeta}

\numberwithin{equation}{section}

\setlength{\leftmargini}{1.em} \setlength{\leftmarginii}{1.em}
\renewcommand{\labelenumi}{\setlength{\labelwidth}{\leftmargin}
   \addtolength{\labelwidth}{-\labelsep}
   \hbox to \labelwidth{\theenumi.\hfill}}

\maketitle

\section{Introduction}

Given a finite group $G$, a {\it multiplicity-free character of $G$} is a character $\chi$ of $G$ such that for every irreducible character $\psi$ of $G$, we have $\langle \chi, \psi \rangle \leq 1$.
 
A {\it Gelfand pair} $(G,H)$ is a group $G$ together with a subgroup $H$ such that the trivial character of $H$ induces a multiplicity-free character of
$G$. The importance of Gelfand pairs is indicated by the following equivalent conditions for finite groups (\cite{AHN}, \cite{Wik}):

\noindent (i) The algebra of $(H,H)$-double invariant functions on G with multiplication defined by convolution is commutative.

\noindent (ii) For any irreducible representation $\rho$ of $G$, the space of $H$-invariant vectors in $\rho$ is no more than 1-dimensional.

\noindent (iii) For any irreducible representation $\rho$ of $G$, the dimension of ${\rm Hom}_H(\rho, \mathbf 1)$ is less than or equal to 1, where $\mathbf 1$ denotes the trivial representation.

\noindent (iv) The permutation representation of $G$ on the cosets of $H$ is multiplicity-free.

\noindent (v) The centralizer algebra of the permutation representation is commutative.

\noindent (vi) The double cosets $HgH$ determine a commutative Schur ring over $G$.

A {\it strong Gelfand pair} $(G,H)$ is a finite group $G$ together with a subgroup $H$ such that every irreducible character of $H$ induces a multiplicity-free character of $G$. In other words, for any $\chi \in \hat{G}$, $\psi\in \hat{H}$, where $\hat{G}$ is the set of all irreducible characters of $G$ and $\hat{H}$ is the set of all irreducible characters of $H$, we have $\langle \chi, \psi \uparrow ^G \rangle \leq 1$. We note that a strong Gelfand pair is also referred to using the phrase “multiplicity one property” or “multiplicity one theorem” (see \cite{A,AA,B}).

Equivalently, $(G,H)$ is a strong Gelfand pair if and only if the Schur ring determined by the $H$-classes $g^H = \{ g^h : h \in H\}, g\in G$, is commutative \cite{Har}.

The strong Gelfand subgroups of the symmetric group were found in \cite{AHN}; those for wreath products of the form $F\wr S_n$, where $F$ is a finite abelian group, were found in \cite{Can}.

Let $G = {\rm SL}(2,p)$ and let $U = U(2,p) = \mathcal{C}_{p-1} \rtimes \mathcal{C}_p \leq G$ be the subgroup of upper triangular matrices. When $p$ is odd, we let $H_2 \leq U$ be the unique subgroup of $U$ of index $2$. The main result of this paper is to determine the strong Gelfand pairs of ${\rm SL}(2,p)$:

\begin{thm}\label{Thm1}
Let $p>11$ be prime.

\noindent (i) If $p\equiv 1 \mod 4$, then there is only one strong Gelfand pair  $(G,U)$.

\noindent (ii) If $p \equiv 3 \mod 4$, then there are two strong Gelfand pairs: $(G,U)$ and $(G,H_2)$.
\end{thm}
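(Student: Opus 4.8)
The plan is to classify strong Gelfand pairs $(G,H)$ of $G = \mathrm{SL}(2,p)$ by a two-stage argument: first sharply restrict which subgroups $H$ can possibly occur, then verify the multiplicity-one condition for the surviving candidates. The key computational tool throughout is the criterion that $(G,H)$ is a strong Gelfand pair if and only if the $H$-class Schur ring over $G$ is commutative, equivalently $\langle \chi, \psi\uparrow^G\rangle \le 1$ for all $\chi\in\hat G$, $\psi\in\hat H$. By Frobenius reciprocity $\langle \chi,\psi\uparrow^G\rangle = \langle \chi\!\downarrow_H,\psi\rangle$, so the condition is that for every irreducible $\chi$ of $G$, the restriction $\chi\!\downarrow_H$ is multiplicity-free as a character of $H$. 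This reformulation is what I would use constantly, since the character table of $\mathrm{SL}(2,p)$ is completely known (the four families: trivial/Steinberg, principal series of degree $p+1$, discrete series of degree $p-1$, and the degree-$(p{\pm}1)/2$ characters), so everything reduces to understanding how these restrict to candidate subgroups $H$.

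First I would cut down the list of candidate subgroups using the subgroup structure of $\mathrm{SL}(2,p)$, which by Dickson's classification consists of cyclic groups, dihedral-type groups, the upper-triangular Borel $U$ and its subgroups, and (for small exceptional primes) $\mathrm{SL}(2,3)$, $\mathrm{SL}(2,5)$, and a few sporadic pieces. A basic necessary condition is that if $(G,H)$ is a strong Gelfand pair and $H\le K\le G$, then $(G,K)$ need not be one, but a cleaner tool is that strong-Gelfand-ness forces $H$ to be large in a precise sense: the sum $\sum_{\chi\in\hat G}(\text{mult. of }\chi\text{ in }1\uparrow_H^G)$ type inequalities, or more directly the fact that $\langle 1\uparrow_H^G,1\uparrow_H^G\rangle = $ number of $H$-$H$ double cosets must be small, give strong size constraints forcing $|H|$ to be close to $|G|/(p{\pm}1)$. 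I expect this to eliminate all cyclic and most dihedral subgroups because restricting the large principal/discrete series characters to a small abelian or metacyclic $H$ will generically produce repeated constituents. The hypothesis $p>11$ is exactly what rules out the exceptional subgroups $\mathrm{SL}(2,3),\mathrm{SL}(2,5)$ and the small primes where the generic restriction computations fail; I would treat $p\le 11$ as excluded precisely because there the character degrees are too small relative to $|H|$ to force uniqueness.

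Next, having isolated $U$ (order $p(p-1)$) and its index-two subgroup $H_2$ (when $p$ is odd) as the only viable candidates, I would verify directly that these are strong Gelfand pairs and compute exactly when the index-two case works, which is where the $p\bmod 4$ dichotomy enters. For $U$ itself one checks that each irreducible $\chi$ of $G$ restricts to $U$ without repeated constituents; since $U = \mathcal C_{p-1}\rtimes\mathcal C_p$ is a Frobenius group with known character table (one can use Clifford theory over the normal Sylow $p$-subgroup $\mathcal C_p$), this is a finite verification across the four families of $G$-characters. For $H_2$ the point is that $\chi\!\downarrow_U$ may further decompose upon restriction to $H_2$, and multiplicity-freeness is preserved exactly when no two distinct $U$-constituents of $\chi\!\downarrow_U$ share an $H_2$-constituent and no single $U$-constituent breaks into two equal $H_2$-pieces. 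The parity of $p$ governs whether the relevant degree-$(p\pm1)/2$ characters of $G$ (the ones split off by the central extension structure) restrict compatibly, so $p\equiv 3\bmod 4$ admits $H_2$ while $p\equiv 1\bmod 4$ does not.

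The main obstacle I anticipate is the third step in the $H_2$ case, namely the delicate bookkeeping of how the half-degree characters of $G$ and the split principal/discrete series interact under restriction to the index-two subgroup; tracking the induced/restricted constituents through the tower $H_2 \le U \le G$ and pinning down exactly the congruence $p\equiv 3\bmod 4$ as the criterion is where the argument becomes genuinely arithmetic rather than formal. The elimination step (first stage) is laborious but routine once Dickson's list and the size inequalities are in hand; the real content is the precise $H_2$ computation, and I would organize it as a case analysis over the four character families, recording restriction multiplicities in a table and isolating the single family whose behavior flips with $p\bmod 4$.
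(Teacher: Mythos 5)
Your overall frame is the same as the paper's (restriction--multiplicity-freeness via Frobenius reciprocity, the known character tables, a classification-then-verification structure), but the elimination stage as you describe it has a genuine gap: size-type inequalities cannot isolate $U$ and $H_2$ as ``the only viable candidates.'' The Borel contains the chain of subgroups $H_q = \langle a^q\rangle \rtimes \langle b\rangle$ of index $q$ in $U$ for each prime $q \mid p-1$, and for small $q$ these are large metacyclic groups that pass every pigeonhole test: for instance the total character of $H_3$ has degree about $\tfrac{4}{3}(p-1) > p+1$, so no comparison of character degrees against $|H|$ rules it out. The paper kills each $H_q$ ($q>2$) only by an explicit computation, showing $\langle \chi_{(p-1)/q}\!\downarrow_{H_q}, 1_{H_q}\rangle = 2$, and similarly kills $\langle a\rangle$ via $\langle \varphi\!\downarrow_{\langle a\rangle}, 1\rangle = 3$. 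Your proposal never mentions these intermediate subgroups, and your suggested criterion (the double-coset count $\langle 1\uparrow_H^G, 1\uparrow_H^G\rangle$ being ``small'') is a Gelfand-pair heuristic with no quantitative teeth here. Relatedly, you note only that strong-Gelfand-ness does not pass upward, but the organizing tool you need is the \emph{downward} monotonicity (the paper's Lemma \ref{Lem2}: if $(G,K)$ fails, so does $(G,H)$ for every $H \le K$), which is what reduces everything to maximal subgroups of $G$, then of $U$, then of $H_2$; without it, your plan never establishes the ``exactly one/two'' claims, since you must also show no proper subgroup of $H_2$ works when $p \equiv 3 \bmod 4$. The cleaner tool the paper uses for the Dickson-list exceptionals and dihedral groups is the total-character bound (Lemma \ref{Lem4}: if $\deg\tau_H < \deg\chi$ for some $\chi \in \hat G$, the pair fails, applied with the degree-$(p+1)$ character), which is the precise form of your ``big character versus small subgroup'' intuition.

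A secondary misprediction: you guess the $p \bmod 4$ dichotomy is governed by the degree-$(p\pm 1)/2$ characters, but in the paper it is a degree-$(p+1)$ principal series character that fails: for $p \equiv 1 \bmod 4$ the two linear $U$-constituents $\chi_{0,(p-1)/4}$ and $\chi_{0,3(p-1)/4}$ of $\chi_{(p-1)/4}\!\downarrow_U$ become equal on $H_2$, giving multiplicity $2$. Conversely, for $p \equiv 3 \bmod 4$ the reason $H_2$ works is structural rather than a delicate family-by-family computation: $-1 \notin H_2$ (as $-1$ is a nonsquare), so $U = H_2 \times \langle -1\rangle$, every irreducible of $U$ restricts irreducibly to $H_2$, and distinct constituents of a fixed $\chi\!\downarrow_U$ stay distinct because they share the same central sign $\chi(-1)/\chi(1)$. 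Since you planned a full case analysis over the families, you would likely have found this once the tables for $U$ were written down; but as it stands the proposal identifies the wrong family and, more seriously, omits the $H_q$ eliminations and the downward-closure argument needed for completeness.
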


In Theorem \ref{Thm3} we classify the strong Gelfand pairs for primes $p \leq 11$.

\medskip
\noindent
{\bf Acknowledgment} All computations were made using Magma \cite{Mag}.

\section{Character Tables}
\FloatBarrier

In this section we give the character tables for $G$, $PSL(2,p)$, and $U$. This involves finding the character table for certain semidirect products. Let $p$ be a prime.

From Suzuki \cite[p. 393]{Suz}, the conjugacy classes of $U$ have the following representatives:

\noindent (i) $1 = \begin{pmatrix}
1 & 0 \\
0 & 1 \end{pmatrix}$;

\noindent (ii) $-1 = \begin{pmatrix}
-1 & 0 \\
0 & -1 \end{pmatrix}$;

\noindent (iii) Let $a = \begin{pmatrix}
\lambda & 0 \\
0 & 1/\lambda \end{pmatrix}$, where $\lambda$ is a generator of $\mathbb F_p^*$.
Then each $a^i$ represents a distinct class for $1\leq i < p-1$, $i\neq (p-1)/2$;

\noindent (iv) $b = \begin{pmatrix}
1 & 1 \\
0 & 1 \end{pmatrix} $;

\noindent (v) $d = \begin{pmatrix}
1 & t \\
0 & 1 \end{pmatrix} $, where $t$ is a non-square mod $p$;

\noindent (vi) $-b = \begin{pmatrix}
-1 & -1 \\
0 & -1 \end{pmatrix} $;

\noindent (vii) $-d = \begin{pmatrix}
-1 & -t \\
0 & -1 \end{pmatrix} $.

Thus there are $p+3$ classes.  The sizes of these classes are:

\noindent (i) 1;
\qquad (ii) 1;
\qquad (iii) $p$;
\qquad (iv) $(p-1)/2$;

\noindent (v) $(p-1)/2$;
\qquad (vi) $(p-1)/2$;
\qquad   (vii) $(p-1)/2$.

Observe that $U = \langle a, b \rangle$. We will use this to develop the character table.

Let $B = \langle b \rangle$, the subgroup of $U$ whose diagonal entries are 1. Then $B \triangleleft U$, so we lift the characters of $U / B \cong \langle a \rangle$ to get $p-1$ linear characters $1_U, \chi _{0,k}$ for $1\leq k \leq p-2$ which are trivial on $B$. Let $\zeta$ be a primitive $(p-1)^{\text{th}}$ root of unity. These linear characters are determined by their values on the generators $a$ and $b$:
$$
\chi_{0,k} (a) = \zeta^{k}, \quad
\chi_{0,k} (b) = 1.
$$

Because $U$ is a split metacyclic group, we can use a result of Munkholm \cite{Mun} to complete the character table as shown below.

Let $B' = \langle -1, b\rangle$. To find the nonlinear characters of $U$, we induce the linear characters of $B'$. These linear characters are given by
$$
T_{i, k} (b) = \eta ^i,
\quad
T_{i, k} (-1) = (-1)^k,
$$
where $\eta$ is a primitive $p^{\text{th}}$ root of unity, $1 \leq i < p$, and $k = 1, 2$. Recall that $\lambda$ is the generator of $\mathbb{F}_p^*$ used in the definition of $a$. The characters are induced as follows, from \cite[p. 458]{Mun}:

\begin{align*}
\chi_{i,k}((-1)^{\nu}b^\tau)
&= \sum_{\ell=0}^{(p-3)/2} T_{i,k}( a^{\ell} ((-1)^{ \nu }b^\tau)a^{- \ell}  ) 
= \sum_{\ell=0}^{(p-3)/2} T_{i,k}( (-1)^{ \nu }b^{\tau \lambda^{2 \ell}}  )
\\&= \sum_{\ell=0}^{(p-3)/2} (-1)^{\nu k} \eta^{i \tau \lambda^{2 \ell}}
= (-1)^{\nu k} \sum_{\ell=0}^{(p-3)/2} \eta^{i \tau \lambda^{2 \ell}}.
\end{align*}

Let
$$Z = \chi_{1,1}(b) =  \Sigma_{\ell = 0}^{(p-3)/2} \eta^{\lambda ^{2\ell}}$$
and
$$Z^{(t)} = \chi_{1,1}(b^t) = \Sigma_{\ell = 0}^{(p-3)/2} \eta^{t \lambda ^{2\ell}},$$
where $t$ is the non-square mod $p$ used in the definition of $d$. 

When $k$ is a square $\mod p$, each $T_{i,k}$ induces the character $\chi_{i,1}$. Additionally, when $k$ is a non-square $\mod p$, each $T_{i,k}$ induces the character $\chi_{i,2}$. By the Quadratic Gauss sum formula \cite[p. xxvii]{Atl}:

\begin{lem}\label{Lem1}
If $p \equiv 1 \mod 4$, then $Z=\frac 1 2 (-1+\sqrt{p}),$ and $Z^{(t)}=\frac 1 2 (-1-\sqrt{p})$.

\noindent If $p \equiv 3 \mod 4$, then $Z=\frac 1 2 (-1+i\sqrt{p})$ and  $Z^{(t)}=\frac 1 2 (-1-i\sqrt{p})$.
\end{lem}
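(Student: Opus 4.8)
The plan is to reduce the evaluation of $Z$ and $Z^{(t)}$ to the classical quadratic Gauss sum together with one elementary summation identity. First I would observe that as $\ell$ runs over $0,1,\dots,(p-3)/2$, the exponents $\lambda^{2\ell}$ run exactly once over the $(p-1)/2$ quadratic residues mod $p$ (they are precisely the even powers of the generator $\lambda$, i.e.\ the squares in $\mathbb F_p^*$). Hence $Z = \sum_{r}\eta^{r}$, the sum taken over the quadratic residues $r$. Since $t$ is a non-square, the products $t\lambda^{2\ell}$ run exactly once over the quadratic non-residues, so that $Z^{(t)} = \sum_{n}\eta^{n}$, the sum over the non-residues $n$.

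Next I would extract two linear relations in $Z$ and $Z^{(t)}$. Because every element of $\mathbb F_p^*$ is either a residue or a non-residue, adding the two sums gives
\[
Z + Z^{(t)} = \sum_{a=1}^{p-1}\eta^{a} = \left(\sum_{a=0}^{p-1}\eta^{a}\right) - 1 = -1,
\]
using $\sum_{a=0}^{p-1}\eta^{a}=0$. Subtracting them produces the quadratic Gauss sum
\[
Z - Z^{(t)} = \sum_{a=1}^{p-1}\left(\tfrac{a}{p}\right)\eta^{a} =: g,
\]
where $\left(\tfrac{a}{p}\right)$ denotes the Legendre symbol.

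Now I would invoke the quadratic Gauss sum formula cited above, which determines $g$ exactly: $g=\sqrt{p}$ when $p\equiv 1 \bmod 4$, and $g = i\sqrt{p}$ when $p\equiv 3 \bmod 4$. Solving the $2\times 2$ system $Z+Z^{(t)}=-1$, $Z-Z^{(t)}=g$ then yields $Z = \tfrac12(-1+g)$ and $Z^{(t)}=\tfrac12(-1-g)$, which is precisely the claimed pair of values in each congruence class.

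The only genuinely deep ingredient is the determination of the \emph{sign} of $g$ (Gauss's theorem), which I take as given via the cited formula; the rest is bookkeeping. The one point requiring care in the write-up is matching the sign convention of that formula to the particular primitive root $\eta$ and to the identification of $Z$ with the residue sum rather than the non-residue sum, so that $Z$ correctly receives $+g$ and $Z^{(t)}$ receives $-g$; replacing $\eta$ by $\eta^{c}$ for a non-residue $c$ would interchange the two values, so this labeling must be fixed consistently.
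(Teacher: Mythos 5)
Your proof is correct and follows the same route as the paper, which simply cites the quadratic Gauss sum formula (the ATLAS irrationality $b_p=\frac12(-1+\sqrt{p})$ or $\frac12(-1+i\sqrt{p})$) for this lemma without further argument. You merely make explicit the routine reduction the citation suppresses --- identifying $Z$ and $Z^{(t)}$ as the residue and non-residue sums, using $Z+Z^{(t)}=-1$ and $Z-Z^{(t)}=g$, and invoking Gauss's sign determination --- and your closing caveat about the choice of $\eta$ (a non-residue substitution swaps $Z$ and $Z^{(t)}$, harmlessly, since it also swaps the corresponding characters) is a correct and worthwhile observation.
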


Then we have
$$   \chi_{1,1}(- b)=-Z
  \quad  \text{and}\quad   \chi_{1,1}( - b^{t})=-{Z}^{(t)}.
$$  
One then sees that
$$
\chi_{1,2}(b) = Z, \quad
\chi_{1,2}(- b) = Z, \quad
\chi_{1,2}(b^{t}) = {Z}^{(t)}, \quad
\text{and} \quad
\chi_{1,2}( - b^{t})={Z}^{(t)}.
$$
This gives the following character table for $U$:

\begin{table}[ht]
\begin{center}\begin{tabular}{c|l|l|l|l|l|l|l}
& 1 & $-1$ & $a^j$ & $b$ & $d$ & $-b$ & $-d$ \\
\hline
$1$ & 1 & 1 & 1 & 1 & 1 & 1  & 1 \\[2mm]
$\chi_{0,k}$ & $1$ & $(-1)^k$ & $\zeta^{kj}$ & 1 & 1 & $(-1)^k$ & $(-1)^k$\\[2mm]
$\chi_{1,1}$ & $(p-1)/2$ & $-(p-1)/2$ & 0 & $Z$ & $Z^{(t)}$ & $-Z$ & $-Z^{(t)}$ \\[2mm]
$\chi_{1,2}$ & $(p-1)/2$ & $(p-1)/2$ & $0$ & $Z$ & $Z^{(t)}$ & $Z$ &$ Z^{(t)}$ \\[2mm]
$\chi_{2,1}$ & $(p-1)/2$ & $-(p-1)/2$ & 0 & $Z^{(t)}$ & $Z$ & $-Z^{(t)}$ & $-Z$ \\[2mm]
$\chi_{2,2}$ & $(p-1)/2$ & $(p-1)/2$ & $0$ & $Z^{(t)}$ & $Z$ & $Z^{(t)}$ & $Z$ 
\end{tabular}\end{center}
\medskip
\caption{Character Table of $U$}
\label{CTU}
\end{table} 

We continue using notation from above, and now let $\sigma$ be a primitive $(p+1)^{\text{th}}$ root of 1. Then a result of Frobenius \cite[p. 30]{Hum} tells us that the (ordinary) complex character table of $G$, with $p\equiv 1 \mod 4$, is as shown in Table \ref{CTG1} for $1\le i \le (p-3)/2$,\ $1 \le j \le (p-1)/2$,\ $1 \le l \le (p-3)/2$, and $1 \le m \le (p-1)/2$.
 
\begin{table}[ht]
\begin{center}\begin{tabular}{c|l|l|l|l|l|l|l|l}
& 1 & -1 & $b$ & $d$ & $-b$ & $-d$ & $a^l$ & $f^m$\\
\hline
$1$ & 1 & 1 & 1 & 1 & 1 & 1  & 1 & 1\\[2mm]
$\varphi$ & $p$ & $p$ & 0 & 0 & 0 & 0 & 1 & $-1$\\[2mm]
$\chi_i$ & $p+1$ & $(-1)^i(p+1)$ & 1 & 1 & $(-1)^i$ & $(-1)^i$ & $\zeta^{il} + \zeta^{-il}$ & 0\\[2mm]

$\theta_j$ & $p-1$ & $(-1)^j (p-1)$ & $-1$ & $-1$ & $(-1)^{j+1}$ & $(-1)^{j+1}$ & $0$ & $-(\sigma^{j m} + \sigma^{-j m})$\\[2mm]

$\xi_1$ & $(p + 1)/2$ & $(p+1)/2$ & $-Z^{(t)}$ & $-Z$ & $-Z^{(t)}$ & $-Z$ & $(-1)^l$ & 0\\[2mm]
$\xi_2$ & $(p + 1)/2$ & $(p+1)/2$ & $-Z$ & $-Z^{(t)}$ & $-Z$ & $-Z^{(t)}$ & $(-1)^l$ & 0\\[2mm]
$\eta_1$ & $(p - 1)/2$ & $-(p-1)/2$ & $Z$ & $Z^{(t)}$ & $-Z$ & $-Z^{(t)}$ & 0 & $(-1)^{m+1}$\\[2mm]
$\eta_2$ & $(p - 1)/2$ & $-(p-1)/2$ & $Z^{(t)}$ & $Z$ & $-Z^{(t)}$ & $-Z$ & 0 & $(-1)^{m+1}$
\end{tabular} \end{center}
\medskip
\caption{Character Table of $G$, with $p \equiv 1 \mod 4$}
\label{CTG1}
\end{table} 
 \medskip

In the case $p\equiv 3 \mod 4$, the final four characters are different, as shown in Table \ref{CTG3}, where $Z$ and $Z^{(t)}$ for $p\equiv 3 \mod 4$ are as given above.
\begin{table}
\begin{center}\begin{tabular}{c|l|l|l|l|l|l|l|l}
& 1 & -1 & $b$ & $d$ & $-b$ & $-d$ & $a^l$ & $f^m$\\
\hline
$\xi_1'$ & $(p + 1)/2$ & $-(p+1)/2$ & $-Z^{(t)}$ & $-Z$ & $Z^{(t)}$ & $Z$ & $(-1)^l$ & 0\\[2mm]
$\xi_2'$ & $(p + 1)/2$ & $-(p+1)/2$ & $-Z$ & $-Z^{(t)}$ & $Z$ & $Z^{(t)}$ & $(-1)^l$ & 0\\[2mm]
$\eta_1'$ & $(p - 1)/2$ & $(p-1)/2$ & $Z$ & $Z^{(t)}$ & $Z$ & $Z^{(t)}$ & 0 & $(-1)^{m+l}$\\[2mm]
$\eta_2'$ & $(p - 1)/2$ & $(p-1)/2$ & $Z^{(t)}$ & $Z$ & $Z^{(t)}$ & $Z$ & 0 & $(-1)^{m+l}$
 \end{tabular} \end{center}
 \medskip
\caption{Characters of $G$, with $p \equiv 3 \mod 4$}
\label{CTG3}
\end{table}

From the character table of ${\rm SL}(2,p)$ (Tables \ref{CTG1}, \ref{CTG3}) we deduce the following character table for ${\rm PSL}(2,p)$, which can also be found in \cite{Dor}; see Table \ref{CTP1}, where $1\le i \le (p-3)/2$,\ $1 \le j \le (p-1)/2$,\ $1 \le l \le (p-3)/2$, and $1 \le m \le (p-1)/2$. Here, we also require $i$ and $j$ to be even.

\begin{table}[]
\begin{center}\begin{tabular}{c|l|l|l|l|l}
& 1 & $b$ & $d$ & $a^l$ & $f^m$\\
\hline
$1$ & 1 & 1 & 1 & 1 & 1\\[2mm]
$\overline{\varphi}$ & $p$ & 0 & 0 & 1 & $-1$\\[2mm]
$\overline{\chi_i}$ & $p+1$ & 1 & 1 & $\zeta^{il} + \zeta^{-il}$ & 0\\[2mm]
$\overline{\theta_j}$ & $p-1$ & $-1$ & $-1$ & $0$ & $-(\sigma^{j m} + \sigma^{-j m})$\\[2mm]
$\overline{\xi_1}$ & $(p + 1)/2$ & $-Z^{(t)}$ & $-Z$ & $(-1)^l$ & 0\\[2mm]
$\overline{\xi_2}$ & $(p + 1)/2$ & $-Z$ & $-Z^{(t)}$ & $(-1)^l$ & 0\\[2mm]
\end{tabular} \end{center}
\medskip
\caption{Character table for ${\rm PSL}(2,p)$ with $p \equiv 1 \mod 4$}
\label{CTP1}
\end{table}

In the case $p\equiv 3 \mod 4$, the last two characters $\overline{\xi_1}, \overline{\xi_2}$ are different, as indicated in Table \ref{CTP3}.
\begin{table}
\begin{center}\begin{tabular}{c|l|l|l|l|l}
& 1 & $b$ & $d$ & $a^l$ & $f^m$\\
\hline
$\overline{\eta_1'}$ & $(p - 1)/2$ & $Z$ & $Z^{(t)}$ & 0 & $(-1)^{m+l}$\\[2mm]
$\overline{\eta_2'}$ & $(p - 1)/2$ & $Z^{(t)}$ & $Z$ & 0 & $(-1)^{m+l}$
 \end{tabular} \end{center}
 \medskip
\caption{Characters of ${\rm PSL}(2,p)$, with $p \equiv 3 \mod 4$}
\label{CTP3}
\end{table}

\FloatBarrier
\section{Group Theory Lemmas}

We collect here results that will be used in the proof of Theorem \ref{Thm1}.

\begin{lem}\label{Lem2}
Suppose $H\leq K\leq G$ are groups. If $(G,K)$ is not a strong Gelfand pair, then neither is $(G,H)$.
\end{lem}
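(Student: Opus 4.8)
The plan is to prove the contrapositive: assuming $(G,H)$ is a strong Gelfand pair, I will show that $(G,K)$ must also be one. The starting point is to re-express the defining condition in terms of restrictions rather than inductions. By Frobenius reciprocity, for any $\chi \in \hat{G}$ and any $\psi \in \hat{H}$ we have $\langle \chi, \psi\uparrow^G\rangle_G = \langle \chi\downarrow_H, \psi\rangle_H$. Hence $(G,H)$ is a strong Gelfand pair precisely when $\chi\downarrow_H$ is multiplicity-free for every irreducible character $\chi$ of $G$. The same reformulation applies verbatim with $K$ in place of $H$, so the whole statement reduces to the implication: if every $\chi\downarrow_H$ is multiplicity-free, then every $\chi\downarrow_K$ is multiplicity-free.

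The engine of the argument is transitivity of restriction, $\chi\downarrow_H = (\chi\downarrow_K)\downarrow_H$, which holds because $H\leq K\leq G$. First I would fix $\chi \in \hat{G}$ and expand $\chi\downarrow_K = \sum_{\psi} m_\psi\, \psi$ as a sum over the irreducible characters $\psi$ of $K$ with non-negative multiplicities $m_\psi$. Restricting further to $H$ gives $\chi\downarrow_H = \sum_\psi m_\psi\, (\psi\downarrow_H)$. I would then argue by contradiction: suppose $\chi\downarrow_K$ is not multiplicity-free, so $m_{\psi_0}\geq 2$ for some $\psi_0 \in \hat{K}$.

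The decisive observation is that $\psi_0\downarrow_H$ is a genuine nonzero character of $H$, so it has at least one irreducible constituent $\phi \in \hat{H}$ appearing with multiplicity at least $1$. Comparing the coefficient of $\phi$ on both sides of $\chi\downarrow_H = \sum_\psi m_\psi\, (\psi\downarrow_H)$, and using that all the contributions from the various $\psi$ are non-negative, the multiplicity of $\phi$ in $\chi\downarrow_H$ is at least $m_{\psi_0}\cdot 1 \geq 2$. This contradicts the assumption that $\chi\downarrow_H$ is multiplicity-free. Therefore $\chi\downarrow_K$ is multiplicity-free for every $\chi \in \hat{G}$, so $(G,K)$ is a strong Gelfand pair, and the contrapositive is established.

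I do not expect a genuine obstacle here, since the argument is entirely formal. The only point that deserves care is the multiplicity bookkeeping: one must note that a single irreducible constituent $\phi$ of $\psi_0\downarrow_H$ is enough to carry the excess multiplicity from the $K$-level down to the $H$-level, and that constituents contributed by the other $\psi$ can only increase the multiplicity of $\phi$, never cancel it, precisely because characters of finite groups decompose with non-negative integer multiplicities.
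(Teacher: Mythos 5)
Your proof is correct and is essentially the paper's argument viewed through Frobenius reciprocity: the paper fixes $\psi \in \hat K$ with $\langle \chi, \psi\uparrow^G\rangle \ge 2$, picks an irreducible $\varphi$ of $H$ with $\langle \varphi\uparrow^K, \psi\rangle > 0$, and uses transitivity of induction to get $\langle \varphi\uparrow^G, \chi\rangle \ge \langle \varphi\uparrow^K, \psi\rangle\cdot\langle \psi, \chi|_K\rangle \ge 2$, which is precisely your observation that a constituent $\phi$ of $\psi_0|_H$ inherits multiplicity at least $m_{\psi_0} \ge 2$ in $\chi|_H = (\chi|_K)|_H$, merely transported to the induction side. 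The only cosmetic differences are that you argue the (logically equivalent) contrapositive and use transitivity of restriction where the paper uses transitivity of induction, with the same non-negativity bookkeeping carrying the excess multiplicity in both cases.
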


\begin{proof}
Suppose $(G,K)$ is not a strong Gelfand pair.
So there are $\chi \in \hat G, \psi \in \hat K$ with $\langle \chi, \psi \uparrow ^G \rangle \geq 2$.
Let $\varphi$ be an irreducible character of H with $\langle \varphi \uparrow ^K, \psi \rangle > 0.$
Because $(\varphi \uparrow ^K)\uparrow ^G = \varphi \uparrow ^G$ and by Frobenius reciprocity, we have: 

\begin{align*}
\langle \varphi \uparrow ^G, \chi \rangle &= \langle (\varphi \uparrow ^K)\uparrow ^G, \chi \rangle = \langle \varphi \uparrow ^K, \chi |_K \rangle \\&\geq \langle \varphi \uparrow ^K , \psi \rangle \cdot \langle \psi, \chi |_K \rangle > 1.
\end{align*}
Thus, $(G,H)$ is not a strong Gelfand pair. 
\end{proof}

\begin{lem}\label{Lem3}
Suppose $N\leq H\leq G$ are groups with $N\triangleleft G$. Then $(G,H)$ is a strong Gelfand pair if and only if $(G/N, H/N)$ is a strong Gelfand pair.
\end{lem}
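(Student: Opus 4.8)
The plan is to translate everything through the inflation maps along the quotient maps $H\to H/N$ and $G\to G/N$. First note that since $N\triangleleft G$ and $N\le H$, we automatically have $N\triangleleft H$, so $H/N$ is defined and $H/N\le G/N$. Inflation $\mathrm{Inf}$ (composition with the quotient map) gives a bijection from $\widehat{G/N}$ onto the set of $\chi\in\hat G$ with $N\le\ker\chi$, and likewise for $H$; moreover it preserves inner products, since it carries the orthonormal basis of irreducible characters of $G/N$ onto a subset of the irreducible characters of $G$. The computational heart of the argument is the identity
\[
\mathrm{Inf}\bigl(\bar\psi\uparrow^{G/N}\bigr)=\bigl(\mathrm{Inf}\,\bar\psi\bigr)\uparrow^{G}\qquad(\bar\psi\in\widehat{H/N}),
\]
that induction commutes with inflation when the inducing subgroup contains $N$. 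I would check this directly from the induction formula: evaluating both sides at $g\in G$, the condition $x^{-1}gx\in H$ depends only on the coset $xN$ because $N\le H$, the summand $\bar\psi(\overline{x^{-1}gx})$ factors through $G/N$, and summing over the $|N|$ elements of each coset turns the normalising factor $1/|H|$ into $1/|H/N|$.

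Granting these two facts, the forward implication is a one-line consequence. If $(G,H)$ is a strong Gelfand pair, then for any $\bar\chi\in\widehat{G/N}$ and $\bar\psi\in\widehat{H/N}$,
\[
\langle\bar\chi,\ \bar\psi\uparrow^{G/N}\rangle=\langle\mathrm{Inf}\,\bar\chi,\ \mathrm{Inf}(\bar\psi\uparrow^{G/N})\rangle=\langle\mathrm{Inf}\,\bar\chi,\ (\mathrm{Inf}\,\bar\psi)\uparrow^{G}\rangle\le 1,
\]
because $\mathrm{Inf}\,\bar\chi\in\hat G$ and $\mathrm{Inf}\,\bar\psi\in\hat H$ and the pair $(G,H)$ satisfies the multiplicity bound. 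Hence $(G/N,H/N)$ is a strong Gelfand pair.

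The reverse implication is where I expect the real difficulty. Reading the same chain of equalities backwards establishes the bound $\langle\chi,\psi\uparrow^{G}\rangle\le1$ only for those $\chi\in\hat G$, $\psi\in\hat H$ whose kernels contain $N$; the inflation correspondence says nothing about characters nontrivial on $N$, whereas the definition of a strong Gelfand pair for $(G,H)$ ranges over \emph{all} irreducible characters. The converse is therefore not a formal consequence of the correspondence, and the main obstacle is to exclude a hidden multiplicity $\ge2$ among the $N$-nontrivial characters. I would attack this with Clifford theory relative to $N$ together with the specific structure of the application — here $N$ is the centre of ${\rm SL}(2,p)$ and $G/N={\rm PSL}(2,p)$ — while treating with care the question of whether the converse really holds in full generality or whether it tacitly relies on such extra structure. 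For the classification it is in any event the forward direction, via its contrapositive, that is used: one may discard a subgroup $H\supseteq N$ of ${\rm SL}(2,p)$ as soon as $({\rm PSL}(2,p),H/N)$ fails to be a strong Gelfand pair.
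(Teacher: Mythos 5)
Your forward implication is correct, and it is precisely what the paper's one-line proof delivers: the citation that lifts of irreducible characters are irreducible, combined with your identity $\mathrm{Inf}\bigl(\bar\psi\uparrow^{G/N}\bigr)=\bigl(\mathrm{Inf}\,\bar\psi\bigr)\uparrow^{G}$ and the fact that inflation is an isometry onto its image, gives exactly the ``only if'' half. Where you hesitated, your suspicion was right, and it can be sharpened: the ``if'' direction of Lemma \ref{Lem3} as stated is false in general, so no Clifford-theoretic argument will close the gap. Take $G=Q_8$ and $H=N=Z(G)\cong\mathcal C_2$. Then $(G/N,H/N)=(\mathcal C_2\times\mathcal C_2,\,1)$ is a strong Gelfand pair, since the trivial character of the trivial subgroup induces the regular character of an abelian group, which is multiplicity-free. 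But $(Q_8,Z(G))$ is not: if $\chi$ is the faithful degree-$2$ irreducible character of $Q_8$ and $\epsilon$ the nontrivial character of $Z(G)$, then $\chi|_{Z(G)}=2\epsilon$, so by Frobenius reciprocity $\langle\chi,\epsilon\uparrow^{G}\rangle=2$. (Any extraspecial group, e.g.\ the dihedral group of order $8$, works the same way.) The obstruction is exactly the one you isolated: irreducible characters of $G$ nontrivial on $N$ are invisible to the inflation correspondence, and such characters always exist when $N\neq 1$.

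This flaw in the lemma is, as you anticipated, harmless to the paper's classification. The lemma is invoked only in the contrapositive of the forward direction: the maximal subgroups of types (i), (iii), (iv), and (v) are discarded because their images in ${\rm PSL}(2,p)$ fail the total-character bound of Lemma \ref{Lem4}, whence the corresponding subgroups of ${\rm SL}(2,p)$ (all of which contain $-1$ by Lemma \ref{Lem5}) cannot form strong Gelfand pairs. The positive assertions --- that $(G,U)$ is always a strong Gelfand pair, and $(G,H_2)$ is one when $p\equiv 3 \mod 4$ --- are proved directly at the level of ${\rm SL}(2,p)$ via the explicit restriction decompositions of Proposition \ref{Prop1} and the index-$2$ restriction argument, never by descending from ${\rm PSL}(2,p)$. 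So the correct repair is simply to weaken the lemma to its ``only if'' half (equivalently: if $(G/N,H/N)$ is not a strong Gelfand pair, neither is $(G,H)$), which is what your proposal proves and all that Theorem \ref{Thm1} requires.
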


\begin{proof}
This result follows from the fact that the lift of an irreducible character is irreducible \cite[Theorem 17.3]{JaL}.
\end{proof}

For a finite group $G$, the {\it total character} $\tau$ of $G$ is the sum of all the irreducible characters of $G$ \cite{Tot2, Tot1, Tot3}.

\begin{lem}\label{Lem4}
Let $H\leq G$ and let $\tau$ be the total character of $H$. If there is some character $\chi$ of $G$ with $\deg (\tau) < \deg (\chi)$, then $(G,H)$ is not a strong Gelfand pair.
\end{lem}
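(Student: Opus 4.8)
The plan is to prove the contrapositive: if $(G,H)$ is a strong Gelfand pair, then every \emph{irreducible} character $\chi$ of $G$ satisfies $\deg(\chi)\le\deg(\tau)$. I would read $\chi$ here as irreducible, since for a reducible character the asserted implication fails (e.g.\ $(G,G)$ is always a strong Gelfand pair, yet $G$ has characters of degree exceeding $\deg(\tau)$), and the applications will only invoke irreducible characters of large degree.

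The engine is Frobenius reciprocity, as already used in Lemma \ref{Lem2}. For $\chi\in\hat G$ and $\psi\in\hat H$ it gives $\langle\chi,\psi\uparrow^G\rangle=\langle\chi|_H,\psi\rangle$, so the strong Gelfand hypothesis says exactly that $\langle\chi|_H,\psi\rangle\le 1$ for every $\psi\in\hat H$; equivalently, each restriction $\chi|_H$ is a multiplicity-free character of $H$. But a multiplicity-free character of $H$ is a sum of distinct irreducibles, i.e.\ a sum over some subset $S\subseteq\hat H$, so evaluating at the identity and using that each degree $\psi(1)>0$ yields
$$\deg(\chi)=\chi|_H(1)=\sum_{\psi\in S}\psi(1)\le\sum_{\psi\in\hat H}\psi(1)=\deg(\tau).$$
This is the desired degree bound for irreducible $\chi$.

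The contrapositive then gives the lemma: if some irreducible character $\chi$ of $G$ has $\deg(\tau)<\deg(\chi)$, then by the displayed computation $\chi|_H$ cannot be multiplicity-free, so there is a $\psi\in\hat H$ with $\langle\chi|_H,\psi\rangle\ge 2$, whence $\langle\chi,\psi\uparrow^G\rangle\ge 2$ and $(G,H)$ is not a strong Gelfand pair. There is no genuine computational obstacle here; the only points requiring care are the reducibility remark in the first paragraph and applying Frobenius reciprocity in the ``restriction'' direction, so that the total-character degree $\deg(\tau)=\sum_{\psi\in\hat H}\psi(1)$ emerges as the correct upper bound on irreducible degrees in a strong Gelfand pair.
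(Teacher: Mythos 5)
Your proof is essentially the paper's own argument: the paper likewise decomposes $\chi|_H=\sum_i\epsilon_i\varphi_i$, uses the strong Gelfand hypothesis to get $\epsilon_i\le 1$, and evaluates degrees to conclude $\deg(\chi)\le\deg(\tau)$, then takes the contrapositive. Your preliminary remark that $\chi$ must be read as irreducible is a fair clarification (the paper's proof says ``any character'' but the bound $\epsilon_i\le 1$ only follows for irreducible $\chi$, which is all the applications use), not a genuinely different route.
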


\begin{proof}
Let $\varphi_1, \cdots, \varphi_r$ be the irreducible characters of $H$. Then for any character $\chi$ of $G$ we have
$$\chi|_H = \Sigma_{i=1}^r \epsilon _i \varphi_i,$$
where each $\epsilon_i\in \mathbb{N}$. Assume $(G,H)$ is a strong Gelfand pair. So, $\epsilon_i \leq 1$ for all $i$. Thus 
$$\deg(\chi) = \Sigma_{i=1}^r \epsilon_i \deg(\varphi_i) \leq \Sigma_{i=1}^r \deg(\varphi_i) = \deg(\tau).$$
We conclude that whenever $\deg(\tau) < \deg(\chi)$, it must be that $(G,H)$ is not a strong Gelfand pair.
\end{proof}

\begin{lem}\label{Lem5}
There is a one-to-one correspondence between maximal subgroups of ${\rm SL}(2,p)$ and maximal subgroups of ${\rm PSL}(2,p)$.
\end{lem}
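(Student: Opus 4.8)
The plan is to realize ${\rm PSL}(2,p)$ as the quotient ${\rm SL}(2,p)/Z$, where $Z = \{\pm 1\}$ is the center, and then invoke the lattice isomorphism theorem. When $p = 2$ the center is trivial and the correspondence is the identity map, so I would assume $p$ is odd, in which case $Z$ has order $2$. The lattice isomorphism theorem gives an inclusion-preserving bijection between the subgroups of ${\rm SL}(2,p)$ that contain $Z$ and the subgroups of ${\rm PSL}(2,p)$, under which a subgroup $M \geq Z$ is maximal in ${\rm SL}(2,p)$ if and only if $M/Z$ is maximal in ${\rm PSL}(2,p)$. Hence the result would follow once I show that \emph{every} maximal subgroup of ${\rm SL}(2,p)$ contains $Z$; the correspondence is then simply $M \mapsto M/Z$.

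So the crux is to prove that $-1$ lies in every maximal subgroup of ${\rm SL}(2,p)$. First I would record two facts about the element $-1 = \left(\begin{smallmatrix} -1 & 0 \\ 0 & -1\end{smallmatrix}\right)$: it is central, and it has order $2$. (A quick eigenvalue computation shows it is in fact the unique involution of ${\rm SL}(2,p)$, since an order-$2$ matrix of determinant $1$ must have eigenvalues $-1,-1$; this uniqueness is more than I need but is reassuring.) Now let $M$ be a maximal subgroup. Because $-1$ is central, $M\langle -1\rangle$ is a subgroup containing $M$, so by maximality either $-1 \in M$, or $M\langle -1\rangle = {\rm SL}(2,p)$, the latter forcing $[{\rm SL}(2,p):M] = 2$.

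To finish I would rule out the index-$2$ case. A subgroup of index $2$ is normal with quotient $\mathcal{C}_2$, so its existence would force the abelianization of ${\rm SL}(2,p)$ to have even order. But for $p \geq 5$ the group ${\rm SL}(2,p)$ is perfect, and for $p = 3$ its abelianization is $\mathcal{C}_3$; in either case there is no quotient of order $2$. Therefore the index-$2$ alternative cannot occur, whence $-1 \in M$, and the bijection $M \mapsto M/Z$ follows.

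The main obstacle is precisely this last step: everything reduces to the single structural fact that $-1$ belongs to every maximal subgroup, i.e.\ that $Z$ lies in the Frattini subgroup of ${\rm SL}(2,p)$. The centrality and order-$2$ properties of $-1$ are elementary, so the real content is the absence of index-$2$ subgroups, which is where the perfectness of ${\rm SL}(2,p)$ (for $p \geq 5$) does the work.
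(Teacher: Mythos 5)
Your proof is correct, and its skeleton matches the paper's: both reduce the lemma, via the lattice isomorphism theorem, to the single claim that $-1$ lies in every maximal subgroup $M$ of ${\rm SL}(2,p)$. But you close the crux by a genuinely different route. The paper argues by contradiction with an explicit element: if $-1\notin M$, then the diagonal element $a$ of order $p-1$ is not in $M$ (since $a^{(p-1)/2}=-1$), whence it asserts $a\notin\langle -1, M\rangle$, making $\langle -1,M\rangle$ a proper subgroup strictly containing $M$ and contradicting maximality. You instead note that $-1\notin M$ with $M$ maximal forces $M\langle -1\rangle = {\rm SL}(2,p)$, hence $[{\rm SL}(2,p):M]=2$, and you rule out index~$2$ by abelianization: ${\rm SL}(2,p)$ is perfect for $p\ge 5$, has abelianization $\mathcal C_3$ for $p=3$, and the case $p=2$ is trivial since the center is then trivial. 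What each buys: the paper's argument is entirely elementary and uniform in odd $p$, using nothing beyond the order of $a$ --- though as written it elides the possibility $a\in -M$ (i.e.\ $-a\in M$), which for $p\equiv 3\bmod 4$ is not excluded by the displayed computation and deserves a word; your argument is airtight on exactly that point and is more structural --- it really proves that in any group a central involution avoiding all index-$2$ subgroups lies in the Frattini subgroup --- at the modest cost of importing perfectness of ${\rm SL}(2,p)$ and a short case analysis at $p=2,3$, which is harmless here since the paper's main theorems treat small primes separately anyway.
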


\begin{proof}
By the lattice isomorphism theorem, this is equivalent to showing each maximal subgroup of $G$ contains $Z(G) = \langle -1 \rangle$. By way of contradiction, suppose there is a maximal subgroup $M$ of $G$ such that $-1 \notin M$. Then $a\notin M$, because $a^{(p-1)/2} = -1$. But then, $a\notin \langle -1, M \rangle$, so $M$ was not maximal.
\end{proof}

\section{Proof of Theorem \ref{Thm1}}

We first slightly restate Theorem \ref{Thm1}:

\begin{thm}\label{Thm2} Let $G = {\rm SL}(2,p)$ and let $U\le G$ denote the upper triangular subgroup. Let $H_2\le U$ denote the subgroup of elements whose diagonal entries are (non-zero) squares in $\mathbb F_p$.

For prime $p>11$, if $p\equiv 1 \mod 4$, then there is only one strong Gelfand pair  $(G,U)$, while if $p \equiv 3 \mod 4$, then there are two, $(G,U)$ and $(G,H_2)$.
\end{thm}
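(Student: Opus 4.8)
The plan is to organize the search around the monotonicity principle of Lemma~\ref{Lem2}: since $(G,H)$ being a strong Gelfand pair forces $(G,K)$ to be one for every intermediate $K$, it suffices to first determine which maximal subgroups of $G$ yield strong Gelfand pairs, and then to descend through the subgroup lattice of the survivors. I would begin by establishing the anchor, that $(G,U)$ is a strong Gelfand pair. By Frobenius reciprocity this is equivalent to $\chi|_U$ being multiplicity-free for every $\chi\in\hat G$, a finite check carried out directly from Table~\ref{CTU} against Tables~\ref{CTG1} and \ref{CTG3}; the only inner products requiring care are those of the degree $(p\pm1)/2$ characters $\xi_i,\eta_i$ with the nonlinear $\chi_{i,k}$, where the Gauss sums $Z,Z^{(t)}$ of Lemma~\ref{Lem1} enter.

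By Lemma~\ref{Lem5} together with Dickson's classification, the maximal subgroups of $G$ are, up to conjugacy, $U$, the normalizer $N_s$ of a split torus (order $2(p-1)$), the normalizer $N_{ns}$ of a non-split torus (order $2(p+1)$), and, for suitable $p$, the preimages of $A_4,S_4,A_5$. I would rule out the two torus normalizers by a single clean mechanism: the character $\theta_j$ vanishes on the split torus (Table~\ref{CTG1}), so $\theta_j|_{N_s}$ decomposes with a repeated constituent, and dually $\chi_i$ vanishes on the non-split torus, so $\chi_i|_{N_{ns}}$ is not multiplicity-free. Note that Lemma~\ref{Lem4} is \emph{not} available here, since both normalizers have total character degree $\ge p+1$. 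The preimages of $A_4,S_4,A_5$, by contrast, have total character degree bounded by an absolute constant, so Lemma~\ref{Lem4} eliminates them once $p$ is large enough, the finitely many borderline primes in the stated range being checked directly. Thus $U$ is the unique strong Gelfand maximal subgroup, and Lemma~\ref{Lem2} confines all remaining candidates to the subgroup lattice of $U$.

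Since $U^{\mathrm{ab}}\cong\mathcal C_{p-1}$, the maximal subgroups of $U$ are $\langle a\rangle$ (index $p$) and $\langle a^q,b\rangle$ for the primes $q\mid p-1$, with $H_2=\langle a^2,b\rangle$ the unique one of index $2$. The key arithmetic observation is that $-1=a^{(p-1)/2}$ lies in $\langle a^q,b\rangle$ precisely when $q\mid(p-1)/2$; in particular $-1$ lies in $\langle a\rangle$ and in every odd-index maximal subgroup, while $-1\in H_2$ if and only if $p\equiv1\pmod4$. For every maximal subgroup containing $-1$ I would invoke Lemma~\ref{Lem3} to pass to $\mathrm{PSL}(2,p)$ and show its image is not a strong Gelfand subgroup, using Lemma~\ref{Lem4} for the abelian quotient $\overline{\langle a\rangle}$ and Tables~\ref{CTP1}--\ref{CTP3} for the index-$q$ subgroups of the Borel. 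This disposes of everything except $H_2$ when $p\equiv3\pmod4$, where $-1\notin H_2$ and the reduction is unavailable: here a direct computation of $\chi|_{H_2}$ for all $\chi\in\hat G$ shows $(G,H_2)$ \emph{is} a strong Gelfand pair, the mechanism being that for $p\equiv3\pmod4$ the Gauss sums are purely imaginary (Lemma~\ref{Lem1}), which breaks the symmetry between the paired half-degree characters exactly enough to keep every restriction multiplicity-free. For $p\equiv1\pmod4$ the same $\mathrm{PSL}$-reduction shows $(G,H_2)$ fails. To see $H_2$ is then minimal when $p\equiv3\pmod4$, I would check that its own maximal subgroups fail, again by Lemma~\ref{Lem4} or a short direct computation. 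Assembling these gives $\{U\}$ for $p\equiv1\pmod4$ and $\{U,H_2\}$ for $p\equiv3\pmod4$.

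The hard part is the analysis inside $U$, and specifically the direct verification for $p\equiv3\pmod4$ that $\chi|_{H_2}$ is multiplicity-free across all eight families of characters of $G$: this is where the residue of $p$ modulo $4$ genuinely enters through the real-versus-imaginary dichotomy of $Z$ and $Z^{(t)}$, and where the two halves of the theorem diverge. By comparison, the positive result for $U$ and the elimination of the non-Borel maximal subgroups, while needing the separate torus and exceptional arguments above, are comparatively mechanical.
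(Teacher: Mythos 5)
Your outline reproduces the paper's skeleton almost exactly --- Lemma~\ref{Lem2} monotonicity, the classification of maximal subgroups, the table-by-table verification that every $\chi|_U$ is multiplicity-free (the paper's Proposition~\ref{Prop1}), descent through $\langle a\rangle$ and the $H_q=\langle a^q,b\rangle$, and the dichotomy governed by whether $-1\in H_2$ --- so the verdict is: correct in architecture, with one genuinely different local route and two claims that need repair. The different route is your decision to treat the non-Borel maximal subgroups at the $\mathrm{SL}$ level. The paper instead uses Lemma~\ref{Lem5} (every maximal subgroup contains $-1=Z(G)$) together with Lemma~\ref{Lem3} to pass to $\mathrm{PSL}(2,p)$, where the torus normalizers become dihedral of order $p\pm 1$ with total character degree $(p\pm1)/2+1$ or $(p\pm1)/2+2$, comfortably below $p+1$; so Lemma~\ref{Lem4} \emph{does} apply to them, contrary to your parenthetical, which is true only for the order-$2(p\pm1)$ normalizers in $\mathrm{SL}$ that you chose (their total degrees are exactly $p+1$ and $p+3$). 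Your substitute vanishing argument is viable but under-justified as stated: $\theta_j$ does not vanish at $\pm 1\in N_s$, and vanishing elsewhere only gives $\langle \theta_j|_{N_s},\theta_j|_{N_s}\rangle \geq 2(p-1)^2/(2(p-1))=p-1$; you must then compare with the number $(p+5)/2$ of irreducible characters of $N_s$ to force a repeated constituent (this works for $p>7$), and dually for $\chi_i$ on $N_{ns}$. Staying in $\mathrm{SL}$ also inflates your borderline set at the exceptional subgroups: the binary preimages have total degrees $12$, $18$, $30$, so Lemma~\ref{Lem4} is silent for $2{\rm O}$ at $p=17$ (where $18=p+1$ is not strict and $S_4$ does occur since $17\equiv 1 \mod 8$) and for $2{\rm I}$ at $p=19$ and $p=29$; your ``checked directly'' must genuinely carry these, whereas the paper's $\mathrm{PSL}$ route ($A_4$, $\Sigma_4$, $A_5$ with total degrees $6$, $10$, $16$, plus $60\nmid |{\rm SL}(2,13)|$) leaves no borderline cases for $p\geq 13$.

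The second repair concerns your stated ``mechanism'' for $p\equiv 3\mod 4$: by Lemma~\ref{Lem1}, $Z\neq Z^{(t)}$ in \emph{both} congruence classes, so the real-versus-imaginary dichotomy of the Gauss sums is not what separates the two halves of the theorem. What actually makes $(G,H_2)$ work for $p\equiv 3 \mod 4$ is the arithmetic fact you yourself recorded: $-1\notin H_2$, so $U=H_2\times\langle -1\rangle$ and (as the paper argues via \cite[Prop.~20.10]{JaL}, noting $\chi(-1)\neq 0$ for every $\chi\in\hat U$) every irreducible character of $U$ restricts irreducibly to $H_2$; multiplicity-freeness of $\chi|_{H_2}$ then needs only that the constituents in Proposition~\ref{Prop1} stay distinct on $H_2$. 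The only possible collisions are $\chi_{0,i}$ with $\chi_{0,p-1-i}$, which agree on $\langle a^2\rangle$ iff $(p-1)/2$ divides $2i$; for $p\equiv 3\mod 4$ the modulus $(p-1)/2$ is odd, so no $i$ in range collides, while for $p\equiv 1 \mod 4$ the collision at $i=(p-1)/4$ is precisely the paper's counterexample $2\chi_{0,(p-1)/4}|_{H_2}$ inside $\chi_{(p-1)/4}|_{H_2}$ --- more direct than your proposed second $\mathrm{PSL}$ reduction. Since your plan there is a direct computation, the misattribution is not fatal, but as a guiding principle it points at the wrong feature. Two smaller notes: your Lemma~\ref{Lem4} kill of the image of $\langle a\rangle$ in $\mathrm{PSL}(2,p)$ is actually cleaner than the paper's computation $\langle \varphi|_A,1_A\rangle=3$; and in the descent inside $H_2$ observe that for $p\equiv 3 \mod 4$ the case $k=2$ is vacuous ($4\nmid p-1$), so only the odd-$k$ subgroups (handled through $H_k$) and $\langle a^2\rangle\leq\langle a\rangle$ remain, exactly as in the paper.
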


The cases for $p<13$ are given in Theorem \ref{Thm3}.

In this section, we now assume $p \geq 13$.

By Lemma \ref{Lem2}, we only need to consider maximal subgroups of ${\rm SL}(2,p)$. To do this, we use Lemma \ref{Lem5}. By Suzuki \cite[p. 417]{Suz}, up to conjugacy the maximal subgroups of ${\rm PSL}(2,p)$ are isomorphic to:

\noindent (i) a dihedral group of order $p+1$ or $p-1$;

\noindent (ii) the image of $U$ in ${\rm PSL}(2,p)$;

\noindent (iii) the alternating group $A_4$;

\noindent (iv) the symmetric group $\Sigma_4$; or

\noindent (v) the alternating group $A_5$.

From Table \ref{CTP1}, recall that ${\rm PSL}(2,p)$ has an irreducible character of degree $p+1$. We use Lemma \ref{Lem4} to show that the subgroups (conjugate to) (i), (iii), (iv), and (v) do not form strong Gelfand pairs with ${\rm PSL}(2,p)$. We will then show (ii) does form a strong Gelfand pair with ${\rm PSL}(2,p)$, and consider its maximal subgroups.
\medskip

\noindent {\bf Case (i):} Suppose $(p+1)/2$ is odd. The total character of the dihedral group of order $p+1$ has degree $2 + 2((p+1)/2 - 1)/2$ = $1 + (p+1)/2 < p+1$ for all primes. Thus by Lemma \ref{Lem4}, this dihedral group does not form a strong Gelfand pair with ${\rm PSL}(2,p)$.

Now suppose $(p+1)/2$ is even. The total character of the dihedral group of order $p+1$ has degree $4 + 2((p+1)/4 - 1) = 2 + (p+1)/2 < p+1$ for all $p > 5$. Thus, Lemma \ref{Lem4} again gives the result.

On the other hand, suppose $(p-1)/2$ is odd. The total character of the dihedral group of order $p-1$ has degree $2 + 2((p-1)/2 - 1)/2 = 1 + (p-1)/2 < p+1$ for all primes. Thus, Lemma \ref{Lem4} gives the result for this case.

Finally, suppose $(p-1)/2$ is even. The total character of the dihedral group of order $p-1$ has degree $4 + 2((p-1)/4 - 1) = 2 + (p-1)/2 < p+1$ for all primes. The result follows from Lemma \ref{Lem4}.
\medskip

\noindent {\bf Case (iii):} The total character of $A_4$ has degree $6$, which is less than $p+1$ for all $p > 5$. Thus, by Lemma \ref{Lem4}, this case does not produce a strong Gelfand pair.
\medskip 

\noindent {\bf Case (iv):} Similarly, the total character of $\Sigma_4$ has degree $10$, which is less than $p+1$ for all $p > 7$. It follows from Lemma \ref{Lem4} that $(\rm PSL(2,p), \Sigma_4)$ is not a strong Gelfand pair.
\medskip

\noindent {\bf Case (v):} Finally, the total character of $A_5$ has degree $16$ which is less than $p+1$ for all primes $p > 13$. For the case $p=13$, we see that $60$ does not divide the order of ${\rm SL}(2,13)$. So $A_5$ is not a subgroup of ${\rm SL}(2,13)$. Thus, Lemma \ref{Lem4} gives the result.\medskip

We have now shown that if a subgroup $H$ of ${\rm SL}(2,p)$ forms a strong Gelfand pair, $H$ must be a subgroup of $U$. To this end, we next show that $(G,U)$ is a strong Gelfand pair. This is given by the following result.

\begin{prop}\label{Prop1} We have the following decompositions of the characters of $G$ restricted to $U$:

\noindent (i) $(1_{G})|_{U} = 1_{U}$;

\noindent (ii) $\varphi|_{U} = 1_{U} + \chi_{1,2} + \chi_{2,2}$;

\noindent (iii) if $i$ is odd, $\chi_i|_{U} = \chi_{0,i} + \chi_{0,(p-1-i)} + \chi_{1,1} + \chi_{2,1}$;

\noindent (iv) if $i$ is even, $\chi_i|_{U} = \chi_{0,i} + \chi_{0,(p-1-i)} + \chi_{1,2} + \chi_{2,2}$;

\noindent (v) if $j$ is odd, $\theta_j|_{U} = \chi_{1,1} + \chi_{2,1}$;

\noindent (vi) if $j$ is even, $\theta_j|_{U} = \chi_{1,2} + \chi_{2,2}$;

\noindent (vii) $\xi_1|_{U} = \chi_{0,(p-1)/2} + \chi_{1,2}$;

\noindent (viii)  $\xi_2|_{U}= \chi_{0,(p-1)/2}+\chi_{2,2}$;

\noindent (ix) $\eta_1|_{U} = \chi_{1,1}$;

\noindent (x) $\eta_2|_{U} = \chi_{2,1}$;

\noindent (xi) $\xi_1'|_{U} = \chi_{0,(p-1)/2} + \chi_{1,1}$;

\noindent (xii) $\xi_2'|_{U} = \chi_{0,(p-1)/2} + \chi_{2,1}$;

\noindent (xiii) $\eta_1'|_{U} = \chi_{1,2}$;

\noindent (xiv) $\eta_2'|_{U}=  \chi_{2,2}$.

\end{prop}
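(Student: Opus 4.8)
The plan is to prove each of (i)--(xiv) by direct comparison of class functions on $U$, reading all values off the character tables already established: Table~\ref{CTU} for $U$ and Tables~\ref{CTG1}, \ref{CTG3} for $G$. Restricting an irreducible character $\chi$ of $G$ to $U$ simply means evaluating $\chi$ on the seven conjugacy class representatives $1,-1,a^j,b,d,-b,-d$ of $U$; the column labelled $f^m$ in the $G$-tables never enters, since no element of $U$ is $G$-conjugate to an element of the non-split torus. For each claimed equation both sides are genuine characters of $U$ (the right-hand side being an explicit sum of entries of Table~\ref{CTU}), so it suffices to check that the two sides agree on each of these seven classes; a class function is determined by its values on a full set of class representatives, so this gives equality. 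A useful first sanity check is to match degrees: in (ii), $1+(p-1)/2+(p-1)/2=p=\deg\varphi$; in (iii)--(iv), $2+(p-1)=p+1=\deg\chi_i$; in (vii), $1+(p-1)/2=(p+1)/2=\deg\xi_1$; and so on. These already fix the total multiplicity and rule out gross errors.

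Two bookkeeping facts drive the whole computation. First, on the diagonal classes the $G$-value $\chi_i(a^j)=\zeta^{ij}+\zeta^{-ij}$ is symmetric under $j\mapsto -j$, whereas the linear characters $\chi_{0,k}$ are not; this is exactly why the restriction of $\chi_i$ produces the \emph{pair} $\chi_{0,i}+\chi_{0,\,p-1-i}$, since $\chi_{0,i}(a^j)+\chi_{0,\,p-1-i}(a^j)=\zeta^{ij}+\zeta^{-ij}$ using $\zeta^{p-1}=1$. Second, and crucially, the only nontrivial algebraic input needed on the unipotent classes $b,d,-b,-d$ is the relation
$$Z+Z^{(t)}=\sum_{s=1}^{p-1}\eta^{s}=-1,$$
which holds because $\{\lambda^{2\ell}\}$ and $\{t\lambda^{2\ell}\}$ together run over the squares and the non-squares, hence over all nonzero residues mod $p$, so their $\eta$-sums add to $\sum_{s=0}^{p-1}\eta^s-1=-1$. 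In particular the explicit Gauss-sum values of Lemma~\ref{Lem1} are \emph{not} required here: only this symmetric relation is used, and it renders identities such as $-Z^{(t)}=1+Z$ (needed to verify (vii) on the class $b$) or $1+Z+Z^{(t)}=0$ (needed for (ii) on the unipotent classes) immediate.

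The parity splits in (iii)--(vi) are then forced by the behaviour on the ``negative'' classes. On passing from $b$ to $-b$ (and $d$ to $-d$), the $U$-characters of type $1$, namely $\chi_{1,1},\chi_{2,1}$, change sign, while those of type $2$, namely $\chi_{1,2},\chi_{2,2}$, are unchanged; meanwhile $\chi_i$ is multiplied by $(-1)^i$ and $\theta_j$ by $(-1)^j$. Matching these signs forces the type-$1$ summands exactly when $i$ (resp.\ $j$) is odd and the type-$2$ summands when it is even. I expect the main obstacle to be not any single hard step but the sheer care demanded by this sign bookkeeping: one must track $\chi_{0,k}(-1)=(-1)^k$, the value $\chi_{0,(p-1)/2}(a^j)=(-1)^j$ coming from $\zeta^{(p-1)/2}=-1$, and especially $\chi_{0,(p-1)/2}$ on $-1,-b,-d$, where the governing sign is $(-1)^{(p-1)/2}$ and hence depends on whether $p\equiv 1$ or $3\pmod 4$. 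This last dependence is precisely what distinguishes (vii)--(x), built from the $p\equiv 1$ characters $\xi_i,\eta_i$, from (xi)--(xiv), built from the $p\equiv 3$ characters $\xi_i',\eta_i'$. Verifying all seven columns in both congruence classes, collapsing the unipotent entries via $Z+Z^{(t)}=-1$, completes the proof.
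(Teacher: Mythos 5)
Your proposal is correct and follows essentially the same route as the paper: the paper likewise verifies each identity by summing the rows of Table~\ref{CTU} on the seven class representatives $1,-1,a^j,b,d,-b,-d$ and comparing with Tables~\ref{CTG1} and \ref{CTG3}, using only the relation $Z+Z^{(t)}=-1$ (not the full Gauss-sum values) together with $\zeta^{(p-1-i)j}=\zeta^{-ij}$ and the sign $(-1)^{(p-1)/2}$ distinguishing $p\equiv 1$ from $p\equiv 3 \pmod 4$. Your added observations — the degree sanity checks and the parity argument explaining why type-$1$ versus type-$2$ summands are forced — are sound but amount to the same entry-by-entry verification the paper carries out in tabular form.
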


\begin{proof}
We note that each side of any of the above equations is a character. Thus to show that the two sides are equal, we take the sum of the characters given on the right and show this equals the restriction given on the left.

We will frequently use the fact that $Z + Z^{(t)} = -1$ (see Lemma \ref{Lem1}). 

\noindent {\bf(i):}
Clearly the restriction of $1_{G}$ to $U$ gives $1_{U}$.

\noindent {\bf(ii):} This case follows from the calculation below:

\begin{center}\begin{tabular}{c|l|l|l|l|l|l|l}
& 1 & $-1$ & $a^j$ & $b$ & $d$ & $-b^{-1}$ & $-d^{-1}$ \\
\hline
$1_{U}$ & 1 & 1 & 1 & 1 & 1 & 1  & 1 \\[2mm]
$\chi_{1,2}$ & $(p-1)/2$ & $(p-1)/2$ & $0$ & $Z$ & $Z^{(t)}$ & $Z$ &$ Z^{(t)}$ \\[2mm]
$\chi_{2,2}$ & $(p-1)/2$ & $(p-1)/2$ & $0$ & $Z^{(t)}$ & $Z$ & $Z^{(t)}$ & $Z$ \\[2mm]
\hline
$\varphi |_U$ & $p$ & $p$ & 1 & 0 & 0 & 0 & 0
\end{tabular}\end{center}
   
\noindent{\bf(iii):} Here $i$ and $p$ are both odd. Thus, $(-1)^i = -1$ and $(-1)^{p-1-i} = -1$. Additionally, $\zeta^{(p-i-1)j} = \zeta^{(p-1)j}\zeta^{-ij} = \zeta^{-ij}$. This case follows from:

\begin{center}\begin{tabular}{c|l|l|l|l|l|l|l}
& 1 & $-1$ & $a^j$ & $b$ & $d$ & $-b^{-1}$ & $-d^{-1}$ \\
\hline
$\chi_{0,i}$ & $1$ & $-1$ & $\zeta^{ij}$ & 1 & 1 & $-1$ & $-1$\\[2mm]
$\chi_{0,(p-1-i)}$ & $1$ & $-1$ & $\zeta^{-ij}$ & 1 & 1 & $-1$ & $-1$\\[2mm]
$\chi_{1,1}$ & $(p-1)/2$ & $-(p-1)/2$ & 0 & $Z$ & $Z^{(t)}$ & $-Z$ & $-Z^{(t)}$ \\[2mm]
$\chi_{2,1}$ & $(p-1)/2$ & $-(p-1)/2$ & 0 & $Z^{(t)}$ & $Z$ & $-Z^{(t)}$ & $-Z$ \\[2mm]
\hline
$\chi _i |_U$ & $p+1$ & $-(p+1)$ & $\zeta^{ij} + \zeta^{-ij}$ & 1 & 1 & -1 & -1
\end{tabular}\end{center}

\noindent{\bf(iv):} Here $i$ is even and $p$ is odd. Thus, $(-1)^i = 1$ and $(-1)^{p-1-i} = 1$. In this case, we have:

\begin{center}\begin{tabular}{c|l|l|l|l|l|l|l}
& 1 & $-1$ & $a^j$ & $b$ & $d$ & $-b^{-1}$ & $-d^{-1}$ \\
\hline
$\chi_{0,i}$ & $1$ & $1$ & $\zeta^{ij}$ & 1 & 1 & $1$ & $1$\\[2mm]
$\chi_{0,(p-1-i)}$ & $1$ & $1$ & $\zeta^{-ij}$ & 1 & 1 & $1$ & $1$\\[2mm]
$\chi_{1,2}$ & $(p-1)/2$ & $(p-1)/2$ & $0$ & $Z$ & $Z^{(t)}$ & $Z$ &$ Z^{(t)}$ \\[2mm]
$\chi_{2,2}$ & $(p-1)/2$ & $(p-1)/2$ & $0$ & $Z^{(t)}$ & $Z$ & $Z^{(t)}$ & $Z$ \\[2mm]
\hline
$\chi _i |_U$ & $p+1$ & $p+1$ & $\zeta^{ij} + \zeta^{-ij}$ & 1 & 1 & 1 & 1
\end{tabular}\end{center}

\noindent{\bf(v):} Here $j$ is odd, and we get:

\begin{center}\begin{tabular}{c|l|l|l|l|l|l|l}
& 1 & $-1$ & $a^j$ & $b$ & $d$ & $-b^{-1}$ & $-d^{-1}$ \\
\hline
$\chi_{1,1}$ & $(p-1)/2$ & $-(p-1)/2$ & 0 & $Z$ & $Z^{(t)}$ & $-Z$ & $-Z^{(t)}$ \\[2mm]
$\chi_{2,1}$ & $(p-1)/2$ & $-(p-1)/2$ & 0 & $Z^{(t)}$ & $Z$ & $-Z^{(t)}$ & $-Z$ \\[2mm]
\hline
$\theta_j |_U$ & $p-1$ & $-(p-1)$ & 0 & -1 & -1 & 1 & 1
\end{tabular}\end{center}

\noindent{\bf (vi):} Here $j$ is even, and:

\begin{center}\begin{tabular}{c|l|l|l|l|l|l|l}
& 1 & $-1$ & $a^j$ & $b$ & $d$ & $-b^{-1}$ & $-d^{-1}$ \\
\hline
$\chi_{1,2}$ & $(p-1)/2$ & $(p-1)/2$ & $0$ & $Z$ & $Z^{(t)}$ & $Z$ &$ Z^{(t)}$ \\[2mm]
$\chi_{2,2}$ & $(p-1)/2$ & $(p-1)/2$ & $0$ & $Z^{(t)}$ & $Z$ & $Z^{(t)}$ & $Z$ \\[2mm]
\hline
$\theta_j |_U$ & $p-1$ & $p-1$ & 0 & -1 & -1 & -1 & -1
\end{tabular}\end{center}

\noindent{\bf (vii):} Because $p \equiv 1 \mod 4$, we get $(-1)^{(p-1)/2} = 1$. We also note that $\zeta^{(p-1)j/2} = (\zeta^{(p-1)/2})^j = (-1)^j$. Then we have: 

\begin{center}\begin{tabular}{c|l|l|l|l|l|l|l}
& 1 & $-1$ & $a^j$ & $b$ & $d$ & $-b^{-1}$ & $-d^{-1}$ \\
\hline
$\chi_{0,(p-1)/2}$ & 1 & 1 & $(-1)^j$ & 1 & 1 & 1 & $1$\\[2mm]
$\chi_{1,2}$ & $(p-1)/2$ & $(p-1)/2$ & $0$ & $Z$ & $Z^{(t)}$ & $Z$ &$ Z^{(t)}$ \\[2mm]
\hline
$\xi_1 |_U$ & $(p+1)/2$ & $(p+1)/2$ & $(-1)^j$ & $-Z^{(t)}$ & $-Z$ & $-Z^{(t)}$ & $-Z$
\end{tabular}\end{center}

\noindent{\bf (viii):} This case gives:

\begin{center}\begin{tabular}{c|l|l|l|l|l|l|l}
& 1 & $-1$ & $a^j$ & $b$ & $d$ & $-b^{-1}$ & $-d^{-1}$ \\
\hline
$\chi_{0,(p-1)/2}$ & 1 & 1 & $(-1)^j$ & 1 & 1 & 1 & 1 \\[2mm]
$\chi_{2,2}$ & $(p-1)/2$ & $(p-1)/2$ & $0$ & $Z^{(t)}$ & $Z$ & $Z^{(t)}$ & $Z$ \\[2mm]
\hline
$\xi_2 |_U$ & $(p+1)/2$ & $(p+1)/2$ & $(-1)^j$ & $-Z$ & $-Z^{(t)}$ & $-Z$ & $-Z^{(t)}$
\end{tabular}\end{center}

\noindent{\bf (ix):} For this case, we simply have:

\begin{center}\begin{tabular}{c|l|l|l|l|l|l|l}
& 1 & $-1$ & $a^j$ & $b$ & $d$ & $-b^{-1}$ & $-d^{-1}$ \\
\hline
$\chi_{1,1} = \eta_1 |_U$ & $(p-1)/2$ & $-(p-1)/2$ & 0 & $Z$ & $Z^{(t)}$ & $-Z$ & $-Z^{(t)}$
\end{tabular}\end{center}

\noindent{\bf (x):} It is seen that:

\begin{center}\begin{tabular}{c|l|l|l|l|l|l|l}
& 1 & $-1$ & $a^j$ & $b$ & $d$ & $-b^{-1}$ & $-d^{-1}$ \\
\hline
$\chi_{2,1} = \eta_2 |_U$ & $(p-1)/2$ & $-(p-1)/2$ & 0 & $Z^{(t)}$ & $Z$ & $-Z^{(t)}$ & $-Z$
\end{tabular}\end{center}

\noindent{\bf (xi):} Because $p \equiv 3 \mod 4$, we get $(-1)^{(p-1)/2} = -1$. Then we compute:

\begin{center}\begin{tabular}{c|l|l|l|l|l|l|l}
& 1 & $-1$ & $a^j$ & $b$ & $d$ & $-b^{-1}$ & $-d^{-1}$ \\
\hline
$\chi_{0,(p-1)/2}$ & 1 & $-1$ & $(-1)^j$ & 1 & 1 & 1 & $1$\\[2mm]
$\chi_{1,1}$ & $(p-1)/2$ & $-(p-1)/2$ & $0$ & $Z$ & $Z^{(t)}$ & $-Z$ & $-Z^{(t)}$ \\[2mm]
\hline
$\xi_1 ' |_U$ & $(p+1)/2$ & $-(p+1)/2$ & $(-1)^j$ & $-Z^{(t)}$ & $-Z$ & $Z^{(t)}$ & $Z$
\end{tabular}\end{center}

\noindent{\bf (xii):} For this case, we do the following:

\begin{center}\begin{tabular}{c|l|l|l|l|l|l|l}
& 1 & $-1$ & $a^j$ & $b$ & $d$ & $-b^{-1}$ & $-d^{-1}$ \\
\hline
$\chi_{0,(p-1)/2}$ & 1 & $-1$ & $(-1)^j$ & 1 & 1 & 1 & 1 \\[2mm]
$\chi_{2,1}$ & $(p-1)/2$ & $-(p-1)/2$ & $0$ & $Z^{(t)}$ & $Z$ & $-Z^{(t)}$ & $-Z$ \\[2mm]
\hline
$\xi_2 ' |_U$ & $(p+1)/2$ & $-(p+1)/2$ & $(-1)^j$ & $-Z$ & $-Z^{(t)}$ & $Z$ & $Z^{(t)}$
\end{tabular}\end{center}

\noindent{\bf (xiii):} We calculate the following restriction:

\begin{center}\begin{tabular}{c|l|l|l|l|l|l|l}
& 1 & $-1$ & $a^j$ & $b$ & $d$ & $-b^{-1}$ & $-d^{-1}$ \\
\hline
$\chi_{1,2} = \eta_1' |_U$ & $(p-1)/2$ & $(p-1)/2$ & 0 & $Z$ & $Z^{(t)}$ & $Z$ & $Z^{(t)}$
\end{tabular}\end{center}

\noindent{\bf (xiv):} Finally, we see:

\begin{center}\begin{tabular}{c|l|l|l|l|l|l|l}
& 1 & $-1$ & $a^j$ & $b$ & $d$ & $-b^{-1}$ & $-d^{-1}$ \\
\hline
$\chi_{2,2} = \eta_2 ' |_U$ & $(p-1)/2$ & $(p-1)/2$ & 0 & $Z^{(t)}$ & $Z$ & $Z^{(t)}$ & $Z$
\end{tabular}\end{center}

This completes consideration of all cases, and we conclude that $(G,U)$ is a strong Gelfand pair.
\end{proof}

Because $(G,U)$ is a strong Gelfand pair, we must now consider the maximal subgroups of $U$ to determine if any of them forms a strong Gelfand pair with $G$. 

\begin{prop}
For any proper subgroup $K \leq U$, $K\neq H_2$, the pair $(G, K)$ is not a strong Gelfand pair.

When $p \equiv 1 \mod 4$, the pair $(G, H_2)$ is not a strong Gelfand pair.

When $p \equiv 3 \mod 4$, the pair $(G, H_2)$ is a strong Gelfand pair.
\end{prop}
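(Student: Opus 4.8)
The plan is to prove the three assertions through two complementary mechanisms: a lattice reduction based on Lemma \ref{Lem2} that reduces the ``generic'' proper subgroups to two explicit non--strong-Gelfand subgroups, and a Clifford-theoretic analysis of the index-two subgroup $H_2$ that isolates the dichotomy between $p\equiv 1$ and $p\equiv 3\bmod 4$.

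First I would set up the reduction. Write $U=\langle a\rangle\ltimes\langle b\rangle$ with $B=\langle b\rangle$ the normal Sylow $p$-subgroup, and classify a proper $K\le U$ by whether $p\mid|K|$. If $p\nmid|K|$ then $K$ lies in a Hall $p'$-subgroup, and all of these are conjugate to $\langle a\rangle$; here $(G,\langle a\rangle)$ is not a strong Gelfand pair by Lemma \ref{Lem4}, since the total character of $\langle a\rangle\cong\mathcal C_{p-1}$ has degree $p-1<p+1=\deg\chi_i$. If $p\mid|K|$ then $B\le K$, so $K=\langle a^e,b\rangle$ with $e\mid p-1$, and properness together with $K\ne H_2$ forces $e\ge 3$. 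If $e$ has an odd prime factor $q$, then $K\le M_q:=\langle a^q,b\rangle$, and I would show $(G,M_q)$ is not a strong Gelfand pair by exhibiting $1_{M_q}$ with multiplicity $\ge 2$ in $\chi_{(p-1)/q}\!\mid_{M_q}$: the index $i=(p-1)/q$ is valid for $p\ge 7$, and by Proposition \ref{Prop1}(iii)/(iv) the two distinct linear constituents $\chi_{0,i}$ and $\chi_{0,p-1-i}$ both restrict to the trivial character of $M_q$, as $\chi_{0,i}(a^q)=\zeta^{(p-1)}=1$ and $\chi_{0,p-1-i}(a^q)=\zeta^{-(p-1)}=1$. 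The remaining possibility is $e$ a power of $2$ (so $e\ge 4$ and $4\mid p-1$, i.e.\ $p\equiv 1\bmod 4$), in which case $K\le H_2$. In every case Lemma \ref{Lem2} propagates non-SGP downward, so it remains only to settle $H_2$ itself.

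For $H_2=\langle a^2,b\rangle$ I would argue via Clifford theory relative to the character $\delta:=\chi_{0,(p-1)/2}$, the unique nontrivial linear character of $U$ with $H_2=\ker\delta$. Each $\psi\in\hat U$ restricts to $H_2$ either irreducibly or as a sum of two distinct irreducibles, hence always multiplicity-free; and two distinct $\psi,\psi'\in\hat U$ share an $H_2$-constituent precisely when $\psi'=\psi\delta$. Since $(G,U)$ is a strong Gelfand pair, each $\chi\!\mid_U$ is multiplicity-free, so $\chi\!\mid_{H_2}$ is multiplicity-free iff $\operatorname{supp}(\chi\!\mid_U)$ contains no pair $\{\psi,\psi\delta\}$ with $\psi\delta\ne\psi$. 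The decisive computation is the $\delta$-twist on the four nonlinear characters, governed by $\delta(-1)=(-1)^{(p-1)/2}$ (equivalently, by whether $-1\in H_2$): when $p\equiv 1\bmod 4$ one gets $\chi_{i,k}\delta=\chi_{i,k}$, whereas when $p\equiv 3\bmod 4$ one gets $\chi_{i,1}\delta=\chi_{i,2}$; for the linear characters $\chi_{0,k}\delta=\chi_{0,k+(p-1)/2}$ in both cases. For $p\equiv 3\bmod 4$ I would then inspect the rows of Proposition \ref{Prop1}: no restriction contains both members of a nonlinear pair $\{\chi_{i,1},\chi_{i,2}\}$, and a linear pair $\{\chi_{0,i},\chi_{0,p-1-i}\}$ can never be a $\delta$-pair because that requires $2i\equiv(p-1)/2\pmod{p-1}$, impossible as $(p-1)/2$ is odd and $2i$ even; hence every $\chi\!\mid_{H_2}$ is multiplicity-free. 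For $p\equiv 1\bmod 4$, taking $i=(p-1)/4$ (valid for $p\ge 13$) gives $\chi_{0,p-1-i}=\chi_{0,i}\delta$, and both lie in $\operatorname{supp}(\chi_i\!\mid_U)$, so $\chi_i\!\mid_{H_2}$ fails to be multiplicity-free.

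I expect the main obstacle to be the verification for $p\equiv 3\bmod 4$: checking uniformly across all rows of Proposition \ref{Prop1} that no restriction carries a full $\delta$-pair. The conceptual key making this tractable is the twist identity $\chi_{i,1}\delta=\chi_{i,2}$ (equivalently $-1\notin H_2$ exactly when $p\equiv 3\bmod 4$), which forces every nonlinear character of $U$ to restrict \emph{irreducibly} to $H_2$ in this case and reducibly otherwise; once this is in hand, the linear collision is ruled out by the single parity observation on $(p-1)/2$.
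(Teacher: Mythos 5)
Your proposal is correct, and its overall skeleton matches the paper's: reduce via Lemma \ref{Lem2} to the cases $\langle a\rangle$, $H_q$ ($q$ an odd prime), and $H_2$; kill the first two; then split $H_2$ according to $p \bmod 4$, using the collision $\chi_{0,(p-1)/4}|_{H_2}=\chi_{0,3(p-1)/4}|_{H_2}$ inside $\chi_{(p-1)/4}|_{H_2}$ for $p\equiv 1 \bmod 4$, exactly as the paper does. But several steps take a genuinely different, and in one place more complete, route. For $A=\langle a\rangle$ the paper computes $\langle \varphi|_A, 1_A\rangle = 3$ directly, whereas you invoke Lemma \ref{Lem4} (the total character of $\mathcal C_{p-1}$ has degree $p-1<p+1$), which is equally valid and shorter. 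For $H_q$ the paper evaluates $\langle \chi_{p_q}|_{H_q}, 1_{H_q}\rangle = 2$ by summing over conjugacy classes; you obtain the same multiplicity by observing that the two distinct linear constituents $\chi_{0,(p-1)/q}$ and $\chi_{0,p-1-(p-1)/q}$ from Proposition \ref{Prop1}(iii)/(iv) are both trivial on $H_q$ --- the same fact without the class arithmetic. Your handling of the remaining $2$-power case ($e=2^m\ge 4$ forces $p\equiv 1\bmod 4$ and $K\le H_2$, which already fails) also subsumes the paper's separate computations for $H_4$ and for the subgroups $\langle a^{2k}\rangle\rtimes\langle b\rangle$ of $H_2$. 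The most substantive difference is the positive direction for $p\equiv 3\bmod 4$: the paper cites \cite[Prop.\ 20.10]{JaL} together with $-1\notin H_2$ and $\chi(-1)\neq 0$ to conclude that every irreducible character of $U$ restricts irreducibly to $H_2$, and then asserts the conclusion from Proposition \ref{Prop1}; strictly speaking this leaves unverified that two \emph{distinct} constituents of some $\chi|_U$ do not restrict to the \emph{same} irreducible of $H_2$. Your Clifford-theoretic analysis supplies exactly this missing check: with $\delta=\chi_{0,(p-1)/2}$ (so $H_2=\ker\delta$), the twist identities $\chi_{i,1}\delta=\chi_{i,2}$ and $\chi_{0,k}\delta=\chi_{0,k+(p-1)/2}$, together with the parity observation that $2i\equiv (p-1)/2 \pmod{p-1}$ is impossible when $(p-1)/2$ is odd, rule out any $\delta$-pair within a single row of Proposition \ref{Prop1}. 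One further small point in your favor: for $p\equiv 1\bmod 4$ the index $i=(p-1)/4$ may be odd (e.g.\ $p=13$), so case (iii) rather than (iv) of Proposition \ref{Prop1} can be the relevant one; your ``(iii)/(iv)'' covers both, while the paper cites only (iv).
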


\begin{proof}
Recall that $U = \langle a, b\rangle = \langle a \rangle \rtimes \langle b \rangle$, so for any subgroup $K\leq U$ we have the following possibilities:

\noindent{\bf (i)} $p \nmid |K|$, in which case $K$ is conjugate to $\langle a^q \rangle$ for some $q$ dividing $p-1$. The maximal subgroups of $U$ containing $K$ are all conjugate to $A = \langle a \rangle$, the cyclic group of order $p-1$.
We see  that
$$\langle \varphi|_A, 1_A \rangle = \frac{1}{p-1}(p + p + 1 + \cdot \cdot \cdot + 1) = 3.$$ Thus, $(G,A)$ is not a strong Gelfand pair.\medskip

\noindent{\bf (ii)} $p \big{|} |K|$, in which case $K = \langle a^q \rangle \rtimes \langle b \rangle$ for some $q$ dividing $p-1$. The maximal subgroups are those with $q$ prime. We define $H_q = \langle a^q \rangle \rtimes \langle b \rangle$, which has index $q$ in $U$. Note that this corresponds with our earlier definition of $H_2$. Let $p_q = (p-1)/q$.

First, we look at $q >2$. Then $p_q \leq (p-1)/3 < (p-3)/2$, so referring to the bounds for Table \ref{CTG1} we consider the character $\chi_{p_q}$ of $G$. We find that the inner product $\langle \chi_{p_q}|_{H_q} , 1_{H_q} \rangle$ is

\begin{align*}
\frac{1}{p\cdot p_q} \Big( (p+1)1 &+ (p+1)1 + 1\cdot p_2 + 1\cdot p_2 + 1\cdot p_2 + 1\cdot p_2 + 2p(p_q - 2)\Big)
\\&= \frac{q}{p(p-1)}\left( 4p + 2p\left( \frac{p-1}{q}\right) - 4p\right) = 2.
\end{align*}
Thus, $(G, H_q)$ is not a strong Gelfand pair whenever $q > 2$.

When $q=2$, there are two cases. 
First, consider $p\equiv 1\mod 4$. As we can deduce from Table \ref{CTU}, each linear character of $H_2$ is the restriction of two different linear characters of $U$. In particular, $\chi_{0, (p-1)/4}|_{H_2} = \chi_{0, 3(p-1)/4}|_{H_2}$. From case (iv) of Proposition \ref{Prop1}, 
\begin{align*}
\chi_{(p-1)/4}|_{H_2} &= \chi_{0, (p-1)/4}|_{H_2} + \chi_{0, 3(p-1)/4}|_{H_2} + \chi_{1,2}|_{H_2} + \chi_{2,2}|_{H_2}
\\&= 2\chi_{0, (p-1)/4}|_{H_2} + \chi_{1,2}|_{H_2} + \chi_{2,2}|_{H_2},
\end{align*}
and we conclude that $(G,H_2)$ is not a strong Gelfand pair.

Finally, if $p \equiv 3 \mod 4$, we show that $(G, H_2)$ is a strong Gelfand pair. Because the subgroup $H_2$ has index 2 in $H$, the result in \cite[Prop. 20.10]{JaL} applies. Since $p \equiv 3 \mod 4$, $|H_2| = p(p-1)/2$ is odd, $-1 \notin H_2$. Thus, because $\chi (-1) \neq 0$ for all $\chi \in \hat H$, we have that each irreducible character of $H_2$ is simply the restriction of an irreducible character of $H$. It follows by Proposition \ref{Prop1} that $(G, H_2)$ is a strong Gelfand pair.

Because $(G, H_2)$ is a strong Gelfand pair when $p \equiv 3 \mod 4$, we must now consider subgroups of $H_2$. We show that no proper subgroup of $H_2$ forms a strong Gelfand pair with $G$. Because $H_2 = \langle a^2 \rangle \rtimes \langle b \rangle$, any subgroup of $H_2$ is of the form (i) $\langle a^{2k} \rangle \rtimes \langle b \rangle$ or (ii) $\langle a^{2k} \rangle$. The subgroups in (i) will be maximal when $k$ is prime; the maximal subgroup in (ii) is $\langle a^2 \rangle$.

\noindent {\bf (i)} If $k$ is an odd prime, we see that $\langle a^{2k} \rangle \rtimes \langle b \rangle$ will be a subgroup of $H_k$. Because $(G, H_k)$ is not a strong Gelfand pair, it follows that $\langle a^{2k} \rangle \rtimes \langle b \rangle$ also does not form a strong Gelfand pair with $G$.

If $k = 2$, then $4 | p-1$, and we have $H_4 = \langle a^4 \rangle \rtimes \langle b \rangle$.
Then $(p-1)/4 < (p-3)/2$, so referring to the bounds for Table \ref{CTG1} we consider the character $\chi_{p_4}$ of $G$. We find that the inner product $\langle \chi_{p_4}|_{H_4} , 1_{H_4} \rangle$ is

\begin{align*}
\frac{1}{p\cdot p_4} \Big( (p+1)1 &+ (p+1)1 + 1\cdot p_2 + 1\cdot p_2 + 1\cdot p_2 + 1\cdot p_2 + 2p(p_4 - 2)\Big)
\\&= \frac{4}{p(p-1)}\left( 4p + 2p\left( \frac{p-1}{4}\right) - 4p\right) = 2.
\end{align*}
So $(G, H_4)$ is not a strong Gelfand pair. 

\noindent {\bf (ii)} We see that $\langle a^{2} \rangle$ is a subgroup of $A$. Thus, by Lemma \ref{Lem2} and because $(G, A)$ is not a strong Gelfand pair, $(G, \langle a^2 \rangle)$ is also not a strong Gelfand pair.

Thus, we conclude that there is no subgroup $K$ of $H_2$ such that $(G, K)$ forms a strong Gelfand pair.

Applying Lemma \ref{Lem2}, we have shown that the only proper subgroup of $U$ which forms a strong Gelfand pair with $G$ is the subgroup $H_2$ when $p \equiv 3 \mod 4$.
\end{proof}

The above applies whenever $p\geq 13$, and so concludes the proof of Theorem \ref{Thm1} and Theorem \ref{Thm2}. \qed\medskip

To finish, we consider individually the cases $p = 2, 3, 5, 7, \text{ and }11$:

\begin{thm}\label{Thm3}

\noindent (i) $p=2$. In this case, $G \cong \Sigma_3$, and the strong Gelfand pairs are $(G, \mathcal C _2)$ and $(G, \mathcal C _3)$.

\noindent (ii) $p=3$. Then the strong Gelfand pairs are $(G, \mathcal C _3)$, $(G, \mathcal C _6)$, and $(G, U)$.

\noindent (iii) $p=5$. Then the strong Gelfand pairs are $(G, {\rm SL}(2,3))$ and $(G, U)$, where ${\rm SL}(2,3)\leq SL(2,5)$ is maximal.

\noindent (iv) $p=7$. Then the strong Gelfand pairs are $(G, U)$, $(G, H_2)$, and $(G, K)$, where $K$ is the preimage of $\Sigma_4$ in ${\rm PSL} (2,7)$. There are two non-conjugate copies of $K$ in $G$.

\noindent (v) $p=11$. Then the strong Gelfand pairs are $(G, H_2)$, $(G, U)$, and $(G, 2{\rm I)}$, where $$2{\rm I} = \langle r,s,t | r^2 = s^3 = t^5 = rst\rangle$$ is the binary icosahedral group. There are two non-conjugate copies of $2{\rm I}$ in $G$.

\end{thm}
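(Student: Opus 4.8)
The plan is to handle the five primes $p \in \{2,3,5,7,11\}$ one at a time, because the degree estimates in Lemma~\ref{Lem4} that eliminated the exceptional subgroups $A_4$, $\Sigma_4$, and $A_5$ in the proof of Theorem~\ref{Thm2} all break down below $p=13$: for these primes $\deg\varphi = p$ and the largest irreducible degree $p+1$ are small enough that the total characters of these subgroups no longer fall short. Since $|G| = |\mathrm{SL}(2,p)| \le 1320$ here, the subgroup lattice of $G$ up to conjugacy is small enough to enumerate explicitly (in Magma), and for each class of proper subgroups $H$ the test is the Frobenius-reciprocity criterion used throughout: $(G,H)$ is a strong Gelfand pair if and only if $\chi|_H$ is multiplicity-free for every $\chi \in \hat G$, which is read off from the character tables of $G$ and $H$.

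The search is organized by Lemma~\ref{Lem2}, which makes the strong Gelfand subgroups an up-set in the subgroup lattice: if $(G,H)$ is a strong Gelfand pair and $H \le K \le G$, then $(G,K)$ is one too. I would therefore reuse the recursive descent of the main proof. At the top level, Lemma~\ref{Lem5} together with Suzuki's list \cite{Suz} gives the maximal subgroups of $G$ as the $\mathrm{SL}$-preimages of the dihedral groups of order $p\pm1$, of the Borel subgroup $U$, and --- now genuinely present --- of $A_4$, $\Sigma_4$, $A_5$ as allowed by $p$. For each maximal $M$ I test whether every $\chi|_M$ is multiplicity-free. If $(G,M)$ fails, Lemma~\ref{Lem2} discards all of $M$'s subgroups simultaneously; if it succeeds, I descend to the maximal subgroups of $M$ (now including subgroups not containing the center, such as $H_2$) and iterate until the up-set is completely determined.

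On the Borel branch the work is a finite-$p$ rerun of Proposition~\ref{Prop1} and the subgroup analysis following it: one checks that the relevant character indices still lie in range and that the same decompositions hold, which reproduces the dichotomy that $H_2$ yields a strong Gelfand pair exactly when $p \equiv 3 \bmod 4$ (hence $H_2$ appears for $p = 3,7,11$ but not for $p=5$). The substantive new content is the exceptional branch: the preimage $\mathrm{SL}(2,3)$ of $A_4$ for $p=5$, the two non-conjugate preimages $K$ of $\Sigma_4$ for $p=7$, and the two non-conjugate copies of the binary icosahedral group $2\mathrm{I}$ (the preimage of $A_5$) for $p=11$. For each I would restrict the characters of $G$ of degree $p$, $p\pm1$, and $(p\pm1)/2$ and verify directly that no multiplicity exceeds $1$, then invoke Lemma~\ref{Lem2} once more to confirm that every proper subgroup of these exceptional groups fails.

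The main obstacle is exactly this exceptional branch, where no uniform bound is available and the verification is an irreducible finite computation. The delicate point is that multiplicity-freeness of $\chi|_H$ depends on the embedding $H \hookrightarrow G$, not merely on the isomorphism type of $H$: the two non-conjugate copies of $\Sigma_4$ in $\mathrm{PSL}(2,7)$, and likewise of $A_5$ in $\mathrm{PSL}(2,11)$, are distinct subgroups whose restricted characters must be examined separately, even though both turn out to give strong Gelfand pairs. Correctly tracking the fusion of $G$-conjugacy classes inside these embeddings, and checking that adjoining the central element $-1$ when passing from the $\mathrm{PSL}$-subgroup to its $\mathrm{SL}$-preimage does not destroy multiplicity-freeness, is the place where care is required and the reason the small primes resist the uniform treatment of Theorem~\ref{Thm2}.
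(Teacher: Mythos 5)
Your proposal is correct and matches the paper's approach: the paper's entire proof of Theorem~\ref{Thm3} is a direct Magma computation (plus generating matrices for the exceptional subgroups), and your plan --- enumerating the subgroup lattice for $p\le 11$, pruning via the up-set structure from Lemma~\ref{Lem2}, and checking multiplicity-freeness of each $\chi|_H$ separately for each conjugacy class of embeddings, including the two classes of $\Sigma_4$-preimages for $p=7$ and of $2\mathrm{I}$ for $p=11$ --- is exactly that computation, described in more organizational detail than the paper gives.
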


\begin{proof}
These calculations were made using Magma \cite{Mag}. We give generating matrices for those subgroups which have not been defined previously:

\noindent {\bf (iii):}
${\rm SL}(2,3)$ as a subgroup of ${\rm SL}(2,5)$ is generated by
$$
\begin{pmatrix}
3 & 3 \\
4 & 1
\end{pmatrix},
\begin{pmatrix}
4 & 1 \\
3 & 1
\end{pmatrix},
\begin{pmatrix}
3 & 0 \\
1 & 2
\end{pmatrix},
\begin{pmatrix}
4 & 0 \\
0 & 4
\end{pmatrix}.
$$

\noindent {\bf (iv):}
One copy of $K$ is generated by
$$
\begin{pmatrix}
4 & 2 \\
2 & 3
\end{pmatrix},
\begin{pmatrix}
2 & 0 \\
6 & 4
\end{pmatrix},
\begin{pmatrix}
3 & 2 \\
2 & 4
\end{pmatrix},
\begin{pmatrix}
0 & 6 \\
1 & 0
\end{pmatrix},
\begin{pmatrix}
6 & 0 \\
0 & 6
\end{pmatrix};
$$
the other is generated by
$$
\begin{pmatrix}
3 & 6 \\
3 & 4
\end{pmatrix},
\begin{pmatrix}
5 & 0 \\
3 & 3
\end{pmatrix},
\begin{pmatrix}
2 & 6 \\
5 & 5
\end{pmatrix},
\begin{pmatrix}
4 & 4 \\
1 & 3
\end{pmatrix},
\begin{pmatrix}
6 & 0 \\
0 & 6
\end{pmatrix}.
$$

\noindent {\bf (v):}
The first copy of $2{\rm I}$ is generated by
$$
\begin{pmatrix}
7 & 2 \\
8 & 4
\end{pmatrix},
\begin{pmatrix}
6 & 2 \\
1 & 6
\end{pmatrix},
\begin{pmatrix}
10 & 0 \\
0 & 10
\end{pmatrix};
$$
the other is generated by
$$
\begin{pmatrix}
6 & 9 \\
2 & 5
\end{pmatrix},
\begin{pmatrix}
8 & 5 \\
4 & 4
\end{pmatrix},
\begin{pmatrix}
10 & 0 \\
0 & 10
\end{pmatrix}.
$$
These subgroups are also described in \cite{Fli}.
\end{proof}

\end{document}